\newtheorem{remark}{Remark}
\newtheorem{lemma}[remark]{Lemma}
\newtheorem{theorem}[remark]{Theorem}
\newtheorem{proposition}[remark]{Proposition}
\newtheorem{corollary}[remark]{Corollary}
\title{\bf Weak total resolving sets in graphs}
\author{I. Javaid$^{1}\thanks{Corresponding author email: ijavaidbzu@gmail.com}$, M. Salman$^1$, M. Murtaza$^1$,
F. Iftikhar$^1$ and M. Imran$^2$}
\date{$^1$ Center for Advanced Studies in Pure and Applied Mathematics,
Bahauddin Zakariya University Multan, 60800, Pakistan\\ Email:
\{ijavaidbzu, solo33, mahru830, farheen689\}@gmail.com\\ $^2$ School
of Natural Sciences, National University of Science and Technology,
Islamabad, Pakistan, Email: imrandhab@gmail.com}
\begin{document}

\maketitle

\begin{abstract}
A set $W$ of vertices of $G$ is said to be a weak total resolving
set for $G$ if $W$ is a resolving set for $G$ as well as for each
$w\in W$, there is at least one element in $W-\{w\}$ that resolves
$w$ and $v$ for every $v\in V(G)- W$. Weak total metric dimension of
$G$ is the smallest order of a weak total resolving set for $G$.
This paper includes the investigation of weak total metric dimension
of trees. Also, weak total resolving number of a graph as well as
randomly weak total $k$-dimensional graphs are defined and studied
in this paper. Moreover, some characterizations and realizations
regarding weak total resolving number and weak total metric
dimension are given.
\end{abstract}

\noindent {\it Keywords:} Metric dimension; weak total metric
dimension; weak total resolving number; randomly weak total
$k$-dimensional graph; twins

\noindent {\it AMS Subject Classification Numbers:} 05C12

%%%-----------------------------------------------------------------------
\section{Introduction}
Unless otherwise specified, all the graphs $G$ considered in this
paper are simple, non-trivial and connected with vertex set $V(G)$
and edge set $E(G)$. Two adjacent vertices $u, v$ in $G$ will be
denoted by $u\sim v$, and non-adjacent vertices $u$, $v$ will be
denoted by $u\not\sim v$. The subgraph induced by a set $S$ of
vertices of $G$ is denoted by $\langle S\rangle$. Two isomorphic
graphs $G$ and $H$ are denoted by $G\cong H$. The {\it neighborhood}
of a vertex $v$ of $G$ is the set $N(v) = \{u\in V(G)\ |\ u\sim
v\}$. The number of elements in $N(v)$ is the {\it degree} of $v$,
denoted by $d(v)$. The maximum degree of $G$ is denoted by
$\Delta(G)$. If two distinct vertices $u$ and $v$ of $G$ have the
property that $N(u)- \{v\} = N(v)- \{u\}$, then $u$ and $v$ are
called {\it twin vertices} (or simply twins) in $G$. If for a vertex
$u$ of $G$, there exists a vertex $v\neq u$ in $G$ such that $u, v$
are twins in $G$, then $u$ is said to be a {\it twin} in $G$. If
$\langle N(v)\rangle \cong K_{d(v)}$ for $v\in V(G)$, then $v$ is
called a {\it complete vertex} in $G$. The number $d(u,v)$ denotes
the {\it distance} between two vertices $u$ and $v$ of $G$, which is
the number of edges in a shortest $u - v$ path in $G$. The maximum
distance between two vertices in $G$ is called the {\it diameter} of
$G$, denoted by $diam(G)$. Two vertices $u, v$ of $G$ are said to be
{\it antipodal} if $d(u,v) = diam(G)$ otherwise, $u$ and $v$ are
called {\it non-antipodal}. A vertex of degree one is called a {\it
leaf} in $G$. A vertex of degree at least three in $G$ is called a
{\it major vertex}. An end vertex $u$ is a {\it terminal vertex} of
a major vertex $v$ such that $d(u,v) < d(u,w)$ for each other major
vertex $w$. The number of terminal vertices for a major vertex $v$
is its terminal degree $td(v)$. If $td(v)>0$ for a major vertex $v$,
then $v$ is called an {\it exterior major vertex}. The sum $\sum
td(v)$ taken over all the major vertices $v$ of $G$ is denoted by
$\sigma(G)$, and $ex(G)$ denotes the number of exterior major
vertices of $G$. The symbol $X \bigtriangledown Y$ denotes the {\it
symmetric difference} of two sets $X$ and $Y$.

 A vertex $x$ of $G$ {\it resolves} (or
{\it distinguishes}) two distinct vertices $y, z$ of $G$ if $d(y,x)
\neq d(z,x)$. The $k$-tuple $c_U(v) =
(d(v,u_1),d(v,u_2),\ldots,d(v,u_k))$ is the {\it code of} $v$ with
respect to a set $U = \{u_1,u_2,\ldots,u_k\}\subseteq V(G)$. A
subset $W$ of $V(G)$ is called a {\it resolving set} for a graph $G$
if for every two distinct vertices $u$ and $v$ of $G$, there is an
element $w$ in $W$ that resolves $u$ and $v$. Equivalently, the set
$W$ is a resolving set if for every two vertices $u$ and $v$ of $G$,
we have $c_{W}(u)\neq c_{W}(v)$. The minimum cardinality of a
resolving set for $G$ is called the {\it metric dimension} of $G$,
denoted by $\dim(G)$. A resolving set of cardinality $\dim(G)$ is
called a {\it metric basis} of $G$. This concept was firstly studied
by Slater in 1975 in \cite{slater1975} with the names {\it locating
set \emph{and} location number} rather than resolving set and metric
dimension. The terminologies, we used above for this concept and
will be used throughout this paper, was proposed by Harary and
Melter when they independently studied this concept in 1976
\cite{harary1976}. This concept has wide range of applications not
just in graph theory but to many other fields. For instance,
 Khuller {\em et al.} studied its application in robot navigation \cite{khuller1994}; Melter and
Tomescu studied its application in pattern recognition and image
processing \cite{melter1984}, to name a few. First time, in 1979,
Gary and Johnson noted that to determine the metric dimension of a
graph is an NP-hard problem \cite{gary1979}, however, its explicit
construction was given by Khuller {\em et al.} in 1996
\cite{khuller1996}.

Elements of a metric basis were referred to as sensors in many
applications. If one of the sensors, say $s$, placed at a location
stops working due to any impenetrable difficulty, then we will not
receive enough information regarding the detection of those two
locations where no sensor is placed and they are only be detected by
the sensor $s$. This kind of problem was solved by defining
fault-tolerant resolving set, which was defined by Hernando {\em et
al.} in 2008 \cite{hernando2008} in the following way: a resolving
set $W$ for a graph $G$ is fault-tolerant if $W\setminus\{v\}$ is
also resolving set for each $v$ in $W$. But, a problem still to be
addressed is: {\it let the two locations, say $L_1$, $L_2$, on which
$L_1$ is where the sensor $s$ placed and $L_2$ is where no sensor
placed, and let they are only be detected by the sensor $s$. Then
which of the other placed sensors provides the complete information
regarding the detection of the locations $L_1$ and $L_2$ if the
sensor $s$ stops working?} The answer of this kind of problems leads
to introduce the concept of total resolving set. This concept was
introduced by Javaid {\em et al.} in 2012 \cite{javaid2012} in the
following way: a resolving set for a graph $G$ is called a total
resolving set, written as TR-set, if for every pair of distinct
vertices $u,v$ in $G$, there is a vertex $w$ in $W$ such that
$d(u,w)\ne d(v,w)$ for $u,v\ne w$ (it was named strong total
resolving set in \cite{javaid2012}, but in analogy with total
dominations in graphs in \cite{Cockayne}, we use the term total
resolving sets). But complete graphs or graphs with twin vertices do
not have any total resolving sets. On relaxing a condition in total
resolving set, a new parameter, named as weak total resolving set,
defined by Javaid {\em et al.} in \cite{javaid2012} as: a resolving
set $W$ for a graph $G$ is called a {\it weak total resolving set},
simply written as WTR-set, if for every pair of distinct vertices
$u,v$ of $G$ with $u\in V(G)- W$ and $v\in W$, there is a vertex
$w\in W- \{v\}$  such that $d(u,w)\neq d(v,w)$. Here, in this paper,
we extend the study of this concept and define and study some new
parameters in the context of weak total resolvability. One thing
important to note here is that one might think that the two
concepts: the concept of fault-tolerant resolving set and the
concept of weak total resolving set, are equivalent. But, a
comparison between these two concepts, given in \cite{javaid2012},
shows that they are not equivalent.

 The cardinality of a minimum WTR-set is called the {\it weak total metric dimension}
(WTMD) of $G$, denoted by $\dim_{wt}(G)$. A WTR-set of cardinality
$\dim_{wt}(G)$ is called a {\it weak total metric basis} (WTMB) of
$G$. The {\it weak total resolving number} of a graph $G$, denoted
by $res_{wt}(G)$, is the minimum positive integer $r$ such that
every $r$-set of vertices of $G$ is a WTR-set for $G$. Chartrand and
Zhang in \cite{chartrandzhang2000} considered graphs $G$ with
$\dim(G) = res(G)$. They called these graphs randomly
$k$-dimensional graphs, where $k = \dim(G)$. We say that a graph $G$
is \textit{randomly weak total $k$-dimensional} if $\dim_{wt}(G) =
res_{wt}(G)=k$. The aim of this paper is to study the three
parameters $\dim_{wt}(G)$, $res_{wt}(G)$ and randomly weak total
$k$-dimensional graph. We investigate $\dim_{wt}(G)$ and
$res_{wt}(G)$ when $G$ is a tree. Also, we reveal some properties of
graphs $G$ having $\dim_{wt}(G) = 2$ and characterize all the graphs
$G$ with $\dim_{wt}(G) = |G|$. Moreover, we classify the graphs $G$
with $res_{wt}(G) = 3$ and $res_{wt}(G) = |G|$.

%%%------------------------------------------------------------------------------
\section{Weak total metric dimension}
The following useful result for finding the weak total metric
dimension of graphs was proposed in \cite{javaid2012}.

\begin{lemma}{\em \cite{javaid2012}}\label{lem1}
A resolving set $W$ for a graph $G$ is a {\em WTR}-set if and only
if the code, with respect to $W$, of each $x\in V(G)-W$ differ by at
least two coordinates from the code, with respect to $W$, of each
$w\in W$.
\end{lemma}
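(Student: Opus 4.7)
The plan is to recognize that this lemma is essentially a reformulation of the definition of a WTR-set via the observation that, whenever $w \in W$ and $x \in V(G)\setminus W$, the codes $c_W(w)$ and $c_W(x)$ are automatically forced to disagree in at least one specific coordinate. So the WTR-condition (which demands a second resolver) is equivalent to demanding a second coordinate of disagreement.

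First I would fix notation: write $W=\{w_1,\dots,w_k\}$ so that $c_W(v)=(d(v,w_1),\dots,d(v,w_k))$. I would then record the trivial but crucial observation: for each index $i$, the $i$-th coordinate of $c_W(w_i)$ is $d(w_i,w_i)=0$, whereas for any $x\in V(G)\setminus W$ the $i$-th coordinate $d(x,w_i)$ is at least $1$. Hence $c_W(x)$ and $c_W(w_i)$ already differ in position~$i$; the only question is whether they differ in a second position.

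Having set this up, the two directions are immediate translations. For the forward implication, assume $W$ is a WTR-set and take any $x\in V(G)\setminus W$ and $w_i\in W$; the definition supplies some $w_j\in W\setminus\{w_i\}$ with $d(x,w_j)\neq d(w_i,w_j)$, giving a second coordinate $j\neq i$ of disagreement, so the codes differ in at least two coordinates. For the converse, assume the codes differ in at least two coordinates; since position $i$ is forced to be one of them, some other position $j\neq i$ must also witness a difference, and the corresponding $w_j\in W\setminus\{w_i\}$ then plays the role required by the definition of a WTR-set.

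I do not expect any real obstacle: there are no cases to split on and no distance inequalities to manipulate beyond the elementary fact $d(w_i,w_i)=0$. The only point worth stating explicitly, to keep the argument clean, is that the automatic disagreement at position $i$ is what makes the equivalence tight (one coordinate of difference is free, and the WTR-condition is exactly the demand for one more).
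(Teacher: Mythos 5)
Your proof is correct: the key observation that coordinate $i$ of $c_W(w_i)$ is $0$ while that of $c_W(x)$ is at least $1$ makes the equivalence between ``a second resolver in $W-\{w_i\}$'' and ``a second differing coordinate'' immediate in both directions. The paper states this lemma as a citation to an earlier work and gives no proof of its own, but your argument is the natural one and fills that role completely.
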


Let $u,v$ be two twins in a graph $G$ and let $W$ be a WTR-set for
$G$ such that $u\in W$ and $v\not \in W$. Since $d(u,w) = d(v,w)$
for all $w\in V(G)-\{u,v\}$, so the codes of $u$ and $v$, with
respect to $W$, differ by one coordinate only. Thus, Lemma
\ref{lem1} concludes the following remark:

\begin{remark}\label{rem1}
Every WTR-set for a graph $G$ contains all the twins of $G$.
\end{remark}

Since every WTR-set for a graph $G$ is a resolving set as well, so
the following is trivial relationship between $\dim(G)$ and
$\dim_{wt}(G)$

\begin{equation}\label{eq1}
\dim(G)\leq \dim_{wt}(G).
\end{equation}

To see the equality holds in (\ref{eq1}), construct a graph as
follows: Take two copies of $K_3$ with vertex sets $\{u,v,w\}$ and
$\{x,y,z\}$. Take a path $P_{a\geq 2}$ with its two leaves called
$l_1, l_2$. By an edge attach one leaf $l$ with $u$ and one leaf
$l'$ with $x$. Identify the leaf $l_1$ with the vertex $v$ and the
leaf $l_2$ with the vertex $y$. Call the resultant graph $G$. Since
$G$ is not a path and no single vertex forms a WTR-set for $G$, so
$\dim(G)\geq 2$ and $\dim_{wt}(G)\geq 2$. Further, the set
$\{l,l'\}$ is a resolving set as well as WTR-set for $G$ and, as a
consequence, $\dim(G) = \dim_{wt}(G) = 2$. To see the inequality
holds in (\ref{eq1}), let the graph $G = K_r + (K_1 \cup K_s)$ of
order $n = r+s+1$ for $r,s\geq 2$ ($G+H$ is the join (sum) of the
graph $G$ and $H$). It was investigated in \cite{chartrand2000} that
$\dim(G) = n-2$. We claim that $\dim_{wt}(G) = n-1$ or $n$. But,
$\dim_{wt}(G)\neq n$ because the set $V(G)-V(K_1)$ is a WTR-set for
$G$. Since each element of the sets $V(K_r)$ and $V(K_s)$ is twin in
$G$, Remark \ref{rem1} concludes that $\dim_{wt}(G)\geq r+s = n-1$,
and, as a consequence, $\dim(G)< \dim_{wt}(G) = n-1$.

Due to the relationship (\ref{eq1}), we have the following
assertions:

\begin{proposition}\label{prop1}
If $m$ is the weak total metric dimension of a graph $G$ and $D =
diam(G)$, then the following assertions hold:\\
(1)\ The maximum order of $G$ is $D^m+m$.\\
(2)\ the maximum degree of $G$ is at most $3^m-1$.\\
(3)\ $G$ is at most $2^m$-colorable.
\end{proposition}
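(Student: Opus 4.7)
The plan is to exploit that any WTMB $W$ of size $m$ is, in particular, a resolving set, so the codes $c_W(v)$ for $v\in V(G)$ are pairwise distinct; each of the three bounds will then follow from a counting argument about where these codes can live in $\mathbb{Z}^m$.

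For part (1), I would fix a WTMB $W$ and observe that for every $v\in V(G)- W$ each coordinate of $c_W(v)$ lies in $\{1,2,\ldots,D\}$. Since distinct vertices outside $W$ receive distinct codes, the number of such vertices is at most $D^m$; adding back the $m$ elements of $W$ gives $|V(G)|\leq D^m+m$.

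For part (2), choose a vertex $v$ with $d(v)=\Delta(G)$. The key observation is that adjacent vertices in $G$ have distances to any fixed vertex differing by at most $1$, so each neighbor $u$ of $v$ has $c_W(u)$ differing from $c_W(v)$ by at most $1$ in every coordinate. The $\Delta(G)+1$ codes of the neighbors of $v$ together with $c_W(v)$ itself therefore sit inside the cube $\prod_{i=1}^m\{c_W(v)_i-1,\,c_W(v)_i,\,c_W(v)_i+1\}$ of $3^m$ tuples; pairwise distinctness of codes gives $\Delta(G)+1\leq 3^m$, i.e.\ $\Delta(G)\leq 3^m-1$. For part (3), I would consider the parity map $\phi: V(G)\to\{0,1\}^m$ defined by $\phi(v)=(d(v,w_1)\bmod 2,\ldots,d(v,w_m)\bmod 2)$. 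For adjacent $u,v$ the differences $d(u,w_i)-d(v,w_i)$ all have absolute value at most $1$, so $\phi(u)=\phi(v)$ would force each such difference to be $0$, giving $c_W(u)=c_W(v)$ and contradicting that $W$ resolves $G$. Hence $\phi$ is a proper coloring using at most $2^m$ colors.

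I do not expect a serious obstacle: all three bounds are classical coding-style arguments, and the only points requiring mild care are remembering to subtract $1$ in (2) to account for $c_W(v)$ itself occupying one slot of the $3^m$-cube, and noticing in (3) that \emph{equal parities together with a distance difference of absolute value at most $1$} forces the two distances to coincide.
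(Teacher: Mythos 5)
Your proof is correct, but it takes a different route from the paper. The paper's proof is essentially a three-line citation argument: it invokes the known bounds for the \emph{metric dimension} $k=\dim(G)$ --- namely $|G|\leq D^k+k$ (Khuller et al.), $\Delta(G)\leq 3^k-1$ (Chartrand et al.), and $\chi(G)\leq 2^k$ (Chappell et al.) --- and then transfers each one to $m=\dim_{wt}(G)$ via the trivial inequality $\dim(G)\leq\dim_{wt}(G)$, since all three bounds are monotone in the exponent. You instead re-derive each of those classical bounds from scratch, working directly with a weak total metric basis $W$ and using only the fact that $W$ is a resolving set: the $\{1,\dots,D\}^m$ counting for the order, the $3^m$-cube around $c_W(v)$ for the degree (with the correct bookkeeping that $v$ and its $\Delta(G)$ neighbours give $\Delta(G)+1$ distinct codes), and the parity colouring $\phi(v)=(d(v,w_i)\bmod 2)_i$ for the chromatic number. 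All three arguments are sound; in particular your observation in (3) that equal parities combined with coordinate differences of absolute value at most $1$ force equality of codes is exactly the point that makes the parity map a proper colouring. What your approach buys is a self-contained proof that does not depend on the cited literature; what the paper's approach buys is brevity and an explicit display of the fact that every such bound for $\dim(G)$ automatically holds for $\dim_{wt}(G)$ by inequality (\ref{eq1}).
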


\begin{proof}
Let $k$ denotes the metric dimension of $G$. Then $D,k$ and $m$ all
are positive integers.

Khuller {\em et al.} proved in \cite{khuller1996} that $|G|\leq
D^k+k$. Thus, the inequality (\ref{eq1}) implies part (1) of the
theorem.

Chartand {\em et al.} proved in \cite{poisson2000} that $k\geq
\log_3(\Delta(G)+1)$ ($i.e$, $\Delta(G)\leq 3^k -1$). Therefore, the
inequality (\ref{eq1}) follows the part (2) of the theorem.

Let $\chi(G)$ denotes the minimum number of colors needed to color
the graph $G$ properly ($i.e$, chromatic number of $G$). It was
shown by Chappell {\em et al.} in \cite{chappell2008} that
$\chi(G)\leq 2^k$ and, as a result, $G$ is at most $2^m$-colorable,
by the inequality (\ref{eq1}).
\end{proof}

%%%--------------------------------------------------------------------
\section{Graphs $G$ with $\dim_{wt}(G) = 2$ and $\dim_{wt}(G) = n$}
The following result proved by Khuller {\em et al.} in
\cite{khuller1996} is useful.

\begin{proposition}{\em \cite{khuller1996}}\label{prop3}
Let $G$ be a graph and $u,v, w$ be three vertices of $G$ such that
$u\sim v$. If $d(v,w) = d$, then $d(u,w)\in \{d-1,d,d+1\}$.
\end{proposition}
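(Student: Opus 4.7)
The statement is the standard ``unit-step'' property of the graph metric: moving along a single edge changes distance to a third vertex by at most one. My plan is to prove it by a direct application of the triangle inequality, which holds for the graph distance since $d$ is a metric on $V(G)$ (the length of a shortest path is symmetric, non-negative, zero only on the diagonal, and satisfies $d(x,z)\le d(x,y)+d(y,z)$ by concatenating shortest paths).

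First, I would observe that since $u\sim v$ we have $d(u,v)=1$. Then I would apply the triangle inequality in two directions. The upper bound $d(u,w)\le d(u,v)+d(v,w)=1+d=d+1$ follows by concatenating the edge $uv$ with a shortest $v$–$w$ path to obtain a $u$–$w$ walk of length $d+1$, whose shortest sub-path is no longer. The lower bound $d(u,w)\ge d(v,w)-d(u,v)=d-1$ follows from the symmetric triangle inequality $d(v,w)\le d(v,u)+d(u,w)=1+d(u,w)$, i.e.\ $d(u,w)\ge d-1$. Combining the two bounds gives $d-1\le d(u,w)\le d+1$, so $d(u,w)\in\{d-1,d,d+1\}$ as claimed (the value must be an integer because edge counts are integers).

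There is essentially no obstacle here; the only small point to verify is that the triangle inequality is legitimate for the graph metric, which is immediate from the definition of $d(\cdot,\cdot)$ as the length of a shortest path and the fact that the concatenation of an $x$–$y$ path with a $y$–$z$ path is an $x$–$z$ walk (and every walk contains a path of no greater length). Thus the whole proof reduces to two one-line applications of the triangle inequality.
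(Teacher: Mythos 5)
Your proof is correct and is the standard triangle-inequality argument; the paper itself states this proposition as a cited result from Khuller \emph{et al.} without proof, and your two applications of the triangle inequality (plus integrality of the graph metric) are exactly the expected justification.
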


Now, we explore some properties of graphs with $\dim_{wt}(G)=2$ in
the next few results.

\begin{proposition}\label{prop4}
Let $G$ be a graph of order at least three. If $\{u,v\}$ be a WTMB
of $G$, then $u\not\sim v$.
\end{proposition}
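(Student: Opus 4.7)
The plan is to argue by contradiction: suppose $\{u,v\}$ is a WTMB of $G$ but $u\sim v$. Then with $W=\{u,v\}$ the codes of the basis vertices are $c_W(u)=(0,1)$ and $c_W(v)=(1,0)$, so any vertex $x\in V(G)-W$ must (by Lemma \ref{lem1}) have a code differing from each of these in at least two coordinates.

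The first step is to produce a specific vertex $x\in V(G)-\{u,v\}$ that sits close enough to $\{u,v\}$ to force a short, analyzable code. Since $|V(G)|\geq 3$ and $G$ is connected, some vertex of $V(G)-\{u,v\}$ must be adjacent to $u$ or to $v$; without loss of generality (by the symmetry of $u$ and $v$), take $x\sim u$ with $x\neq v$. Then $d(x,u)=1$, and because $u\sim v$, Proposition \ref{prop3} forces $d(x,v)\in\{0,1,2\}$, so in fact $d(x,v)\in\{1,2\}$ (as $x\neq v$). This yields only two possibilities for $c_W(x)$, namely $(1,1)$ or $(1,2)$.

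The second step is to check that each possibility violates the WTR-condition. If $c_W(x)=(1,1)$, then $c_W(x)$ and $c_W(u)=(0,1)$ disagree only in the first coordinate; if $c_W(x)=(1,2)$, then $c_W(x)$ and $c_W(v)=(1,0)$ disagree only in the second coordinate. In either case Lemma \ref{lem1} is violated, contradicting the assumption that $W$ is a WTR-set. Hence $u\not\sim v$.

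The argument is short and its main ingredients (Lemma \ref{lem1} and Proposition \ref{prop3}) are already on the page, so I do not expect a genuine obstacle. The only point requiring care is the appeal to connectedness to extract a neighbor of $u$ or $v$ in $V(G)-\{u,v\}$; this uses $|V(G)|\geq 3$ in an essential way, and the symmetric role of $u$ and $v$ collapses what could otherwise look like four subcases down to two.
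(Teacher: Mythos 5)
Your proof is correct and follows essentially the same route as the paper: both arguments exploit adjacency of $u$ and $v$ to exhibit a third vertex whose code agrees with the code of a basis vertex in all but one coordinate, contradicting Lemma \ref{lem1}. The only difference is cosmetic --- the paper compares the code of $u$ with that of a second neighbor of $v$ in one line, while you justify the existence of such a neighbor via connectedness and split into the two cases $d(x,v)\in\{1,2\}$, which is slightly more careful but not a different idea.
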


\begin{proof}
If $u\sim v$, then the codes (with respect to the set $\{u,v\}$) of
$u$ and the other neighbor of $v$ differ by one coordinate only, and
hence $\{u,v\}$ is not a WTR-set for $G$, by Lemma \ref{lem1}.
\end{proof}

\begin{corollary}\label{cor}
If $\dim_{wt}(G) = 2$ and $\{u,v\}$ be a WTMB of $G$, then $u\sim v$
if and only if $G$ is isomorphic to $K_2$.
\end{corollary}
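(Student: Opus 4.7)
The corollary is essentially a direct consequence of Proposition \ref{prop4}, combined with a small sanity check that $K_2$ actually realizes the situation.

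For the forward direction, the plan is to argue by contrapositive using Proposition \ref{prop4}. Suppose $\{u,v\}$ is a WTMB of $G$ with $u\sim v$. Proposition \ref{prop4} says that whenever $G$ has order at least three, no WTMB $\{u,v\}$ can consist of two adjacent vertices. Therefore $|V(G)|\le 2$. Since $\dim_{wt}(G)=2$ forces $|V(G)|\ge 2$, and since $G$ is connected, the only possibility is $G\cong K_2$.

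For the converse, I would take $G=K_2$ with $V(G)=\{u,v\}$ and $u\sim v$, and verify that $\{u,v\}$ is indeed a WTMB. As a resolving set: $c_{\{u,v\}}(u)=(0,1)$ and $c_{\{u,v\}}(v)=(1,0)$ are distinct, so $\{u,v\}$ resolves $G$. The weak total condition is vacuous, since $V(G)-\{u,v\}=\emptyset$. Thus $\dim_{wt}(K_2)\le 2$. To see equality, a single-vertex set $\{u\}$ would give codes $c_{\{u\}}(u)=(0)$ and $c_{\{u\}}(v)=(1)$, which differ in only one coordinate, contradicting Lemma \ref{lem1}. Hence $\dim_{wt}(K_2)=2$ and $\{u,v\}$ is a WTMB with $u\sim v$.

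There is essentially no obstacle: once Proposition \ref{prop4} is in hand, the forward implication is immediate, and the backward implication is a direct (and essentially vacuous) verification from the definitions. The only minor point worth being careful about is not to overlook the order-two case when quoting Proposition \ref{prop4}, since that proposition is stated only for graphs of order at least three.
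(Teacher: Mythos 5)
Your proof is correct and matches the paper's intent exactly: the paper states this corollary without proof as an immediate consequence of Proposition \ref{prop4}, and your contrapositive argument for the forward direction plus the direct verification that $K_2$ realizes the adjacent case is precisely the reasoning being left to the reader. Your care in noting that Proposition \ref{prop4} only applies to graphs of order at least three is exactly the right point to flag.
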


\begin{proposition}\label{prop5}
Let $G$ be a graph of order at least four. If $\dim_{wt}(G) = 2$ and
$\{u,v\}$ be a WTMB of $G$, then $u$ and $v$ are not twins in $G$.
In fact, $G$ has no twin.
\end{proposition}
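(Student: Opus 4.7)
The plan is to argue by contradiction. Suppose first that $u$ and $v$ are themselves twins. Because the hypothesis includes $|V(G)|\ge 4\ge 3$, Proposition \ref{prop4} forces $u\not\sim v$, so $u$ and $v$ are non-adjacent twins. This immediately yields $N(u)=N(v)$, hence $d(u,v)=2$ (the graph is connected, so there must be a common neighbor) and, moreover, $d(x,u)=d(x,v)$ for every $x\in V(G)-\{u,v\}$.

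Next I would invoke Lemma \ref{lem1} with respect to the WTMB $W=\{u,v\}$. The codes are $c_W(u)=(0,2)$ and $c_W(v)=(2,0)$, while for any $x\in V(G)-\{u,v\}$ the code has the form $c_W(x)=(a,a)$ with $a\ge 1$. Requiring $c_W(x)$ to differ from $c_W(u)$ in at least two coordinates forces $a\ne 2$. The key intermediate step is then to rule out $a\ge 3$: on a shortest $x$--$u$ path the vertex $w$ at distance $2$ from $u$ exists (because $a\ge 3$), and $w\ne u,v$ (if $w=v$ then $d(x,v)\le a-2<a=d(x,v)$, a contradiction). Since $d(w,u)=2$, this contradicts the previous sentence applied to $w$. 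Hence $a=1$ for every $x$, i.e., $V(G)-\{u,v\}\subseteq N(u)\cap N(v)$. Because $|V(G)|\ge 4$ there are at least two such common neighbors, all carrying the identical code $(1,1)$, so $\{u,v\}$ fails even to be a resolving set. This contradiction shows that $u$ and $v$ are not twins.

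For the final assertion that $G$ has no twin at all, I would simply combine Remark \ref{rem1} with the part just proved: if $x,y$ were twins in $G$, Remark \ref{rem1} would force $\{x,y\}\subseteq W=\{u,v\}$, so $\{u,v\}=\{x,y\}$ would itself be a twin pair, contrary to what we just established.

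The part requiring the most care is the elimination of the case $a\ge 3$; everything else is a routine application of Lemma \ref{lem1}, Proposition \ref{prop4}, and Remark \ref{rem1}. I do not expect to need any new machinery, but the LaTeX of that one path argument (choosing the correct vertex on a shortest path and ruling out $w=v$ via the twin identity $d(x,u)=d(x,v)$) is the only nontrivial computation.
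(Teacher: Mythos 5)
Your proof is correct and follows essentially the same route as the paper: assume $u,v$ are twins, use equidistance to force codes of the form $(a,a)$, derive a Lemma \ref{lem1} violation from a vertex at distance two (the paper phrases this as ``a neighbor of a common neighbor''), and fall back on failure of resolvability when every other vertex is a common neighbor. Your write-up is in fact more careful than the paper's terse argument --- in particular the explicit elimination of $a\ge 3$ and the use of Remark \ref{rem1} for the ``$G$ has no twin'' clause, which the paper leaves implicit --- but the underlying ideas coincide.
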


\begin{proof}
Suppose that $u$ and $v$ are twins in $G$. If $u\sim w$ and $v\sim
w$ for all $w\in V(G)-\{u,v\}$, then $\{u,v\}$ is not a resolving
set for $G$. If $\{u,v\}$ is a resolving set for $G$, then the code
(with respect to the set $\{u,v\}$) of a neighbor of a common
neighbor of $u$ and $v$ differ by one coordinate only from the codes
of $u$ and $v$, a contradiction by Lemma \ref{lem1}.
\end{proof}

\begin{corollary}\label{cor1}
If $\dim_{wt}(G) = 2$, then $G$ has twins if and only if $G$ is
isomorphic to $P_2$ or $P_3$.
\end{corollary}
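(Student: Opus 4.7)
The plan is to lean heavily on Proposition \ref{prop5} and then dispose of the remaining possibilities by direct enumeration of small graphs. Proposition \ref{prop5} already guarantees that no graph $G$ of order at least $4$ with $\dim_{wt}(G)=2$ can contain twins, so the forward implication will be immediate once I rule out the small cases, and the backward implication amounts to verifying the claim for two explicit graphs.

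For the forward direction, I would argue as follows. Assume $\dim_{wt}(G)=2$ and that $G$ has a twin. By Proposition \ref{prop5}, this forces $|V(G)|\leq 3$. The connected graphs on at most three vertices are $K_1$, $P_2=K_2$, $P_3$, and $K_3$. The graph $K_1$ does not admit a WTR-set of size $2$. In $K_3$ every pair of vertices are twins, so by Remark \ref{rem1} every WTR-set must contain all three vertices, giving $\dim_{wt}(K_3)=3\neq 2$. This leaves only $P_2$ and $P_3$, as required.

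For the backward direction, I would verify the two cases directly. In $P_2$ with vertex set $\{u,v\}$, the two vertices are twins (both neighborhoods become empty after removing the other), and by Remark \ref{rem1} both must lie in any WTR-set, so $\dim_{wt}(P_2)=2$. For $P_3$ with leaves $u,w$ and central vertex $v$, the two leaves are twins since $N(u)=\{v\}=N(w)$. Taking $W=\{u,w\}$, the codes are $c_W(u)=(0,2)$, $c_W(v)=(1,1)$, $c_W(w)=(2,0)$, which are pairwise distinct and in which $c_W(v)$ differs from each of $c_W(u)$ and $c_W(w)$ in two coordinates; by Lemma \ref{lem1}, $W$ is a WTR-set, and hence $\dim_{wt}(P_3)=2$.

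The main obstacle is essentially already cleared by Proposition \ref{prop5}; the only subtlety is remembering to check that $K_3$ (which has twins and order $3$) is killed not by Proposition \ref{prop5} but by the twin-counting Remark \ref{rem1}, and to confirm that $P_2$ and $P_3$ genuinely achieve $\dim_{wt}=2$ rather than being degenerate. Both of these are routine verifications.
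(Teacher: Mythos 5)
Your proof is correct and follows exactly the route the paper intends: the corollary is stated as an immediate consequence of Proposition~\ref{prop5}, and your write-up simply fills in the intended details (reduce to order at most $3$, eliminate $K_1$ and $K_3$ via Remark~\ref{rem1}, and verify $P_2$ and $P_3$ directly). No gaps.
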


\begin{theorem}\label{thm2}
If $\dim_{wt}(G) = 2$ and $\{u,v\}$ be a WTMB of $G$, then degree of
$u$ and $v$ is at most two.
\end{theorem}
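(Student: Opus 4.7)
The plan is to bound the number of neighbors a WTMB vertex can have by analyzing the possible codes of neighbors. Assume $|G|\geq 3$ (the case $G\cong K_2$ is trivial since both vertices have degree $1$), so by Proposition \ref{prop4} we have $u\not\sim v$. Set $d = d(u,v)\geq 2$; note that $c_{\{u,v\}}(u) = (0,d)$ and $c_{\{u,v\}}(v) = (d,0)$. The argument is symmetric in $u$ and $v$, so it suffices to show $d(u)\leq 2$.

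Suppose, for contradiction, that $d(u)\geq 3$, and let $w_1,w_2,w_3$ be three distinct neighbors of $u$. For each $w_i$ we have $d(w_i,u)=1$, and by Proposition \ref{prop3} applied to the edge $u w_i$ we obtain $d(w_i,v)\in\{d-1,d,d+1\}$. Consequently, the code of any neighbor of $u$ with respect to $\{u,v\}$ lies in the three-element set $\{(1,d-1),(1,d),(1,d+1)\}$. If $d(u)\geq 4$, two neighbors would share a code, contradicting the fact that $\{u,v\}$ is a resolving set. Hence, $d(u)=3$, and the three neighbors $w_1,w_2,w_3$ must realize all three codes above; in particular, some neighbor $w$ of $u$ has $c_{\{u,v\}}(w)=(1,d)$.

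Now observe that $c_{\{u,v\}}(w)=(1,d)$ and $c_{\{u,v\}}(u)=(0,d)$ agree in the second coordinate, differing in exactly one position. By Lemma \ref{lem1}, this contradicts the assumption that $\{u,v\}$ is a WTR-set. Therefore $d(u)\leq 2$, and exchanging the roles of $u$ and $v$ yields $d(v)\leq 2$ as well. The main (and really the only) subtlety is combining the two constraints simultaneously: Proposition \ref{prop3} restricts the pool of possible neighbor codes to three, the resolving property forces all three to be realized once $d(u)=3$, and then Lemma \ref{lem1} rules out the middle code by comparison with $u$ itself.
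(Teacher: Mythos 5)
Your proof is correct and follows essentially the same route as the paper's: Proposition \ref{prop3} limits the neighbor codes of $u$ to $\{(1,d-1),(1,d),(1,d+1)\}$, Lemma \ref{lem1} excludes $(1,d)$ by comparison with $c_{\{u,v\}}(u)=(0,d)$, and the resolving property forces distinct codes, leaving at most two neighbors. The only difference is presentational — you phrase it as a contradiction splitting the cases $d(u)\geq 4$ and $d(u)=3$, while the paper argues directly that only two codes remain available.
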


\begin{proof}
Let $d(u,v) = p$, then $p\geq 2$ by Proposition \ref{prop4}.
Proposition \ref{prop3} yields that the second (first) coordinate of
the code, with respect to the set $\{u,v\}$, of a neighbor of $u$
(or $v$) is one of $\{p-1,p,p+1\}$. But, $(1,p)$ (or $(p,1)$) cannot
be the code of any neighbor of $u$ (or $v$), by Lemma \ref{lem1}.
Since $\{u,v\}$ is a resolving set so both $u$ and $v$ can have at
most two neighbors.
\end{proof}

\begin{theorem}\label{thm3}
If $\dim_{wt}(G) = 2$ and $\{u,v\}$ be a WTMB of $G$, then the
followings are true:\\
(1)\ The geodesic $P$ between $u$ and $v$ is unique.\\
(2)\ Each neighbor of $u$ and $v$ has degree at most three.\\
(3)\ Every vertex on $P$, other than $u, v$ and their neighbors, has
degree at most five.\\
(4)\ The maximum degree of $G$ is at most eight.\\
(5)\ For any $w\in \{u,v\}$ and for each $z\in N(w)$, $r\not\sim w$
for all $r\in N(z)$.
\end{theorem}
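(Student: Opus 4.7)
The plan is to prove (1) first, and then to use it together with Lemma~\ref{lem1}, Proposition~\ref{prop3} and Theorem~\ref{thm2} to derive the degree bounds (5), (2), (3), (4) in that order. Throughout write $p = d(u,v)$, which is at least $2$ by Proposition~\ref{prop4}, and $c(x) = (d(x,u),d(x,v))$. Two facts will be invoked repeatedly: resolution by $\{u,v\}$ forces distinct vertices to have distinct codes, and by Lemma~\ref{lem1} the code of any $x \in V(G) - \{u,v\}$ must differ in \emph{both} coordinates from $(0,p)$ and from $(p,0)$.

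For (1), I argue by contradiction: if two distinct $u$--$v$ geodesics existed, they would diverge at some smallest index $i$, and the two vertices appearing at position $i$ would both have code $(i,p-i)$, contradicting resolution. For (5), the only nontrivial situation is that two distinct neighbors $z_1,z_2$ of $w \in \{u,v\}$ are adjacent. By Theorem~\ref{thm2} and the argument in its proof, the codes of neighbors of $u$ lie in $\{(1,p-1),(1,p+1)\}$; since $c(z_1)\neq c(z_2)$ by resolution, both codes occur, so $|d(z_1,v) - d(z_2,v)| = 2$. But $z_1\sim z_2$ forces $|d(z_1,v) - d(z_2,v)| \le 1$ by Proposition~\ref{prop3}, a contradiction; the case $w=v$ is symmetric.

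For (2), fix $z \in N(u)$. A neighbor $r$ of $z$ has $d(r,u) \in \{0,1,2\}$ by Proposition~\ref{prop3}: the value $0$ forces $r=u$; the value $1$ would put two neighbors of $u$ adjacent, forbidden by~(5); so $d(r,u) = 2$ otherwise. Among the three resulting codes $(2,d_2)$, the one with $d_2 = p$ differs from $(0,p)$ in a single coordinate and is excluded by Lemma~\ref{lem1}, leaving at most two codes, and by resolution each admissible code is held by at most one vertex, so $\deg(z) \le 1 + 2 = 3$. For (3), let $z = x_i$ on the unique geodesic with $2 \le i \le p-2$; of the nine candidate codes for neighbors of $z$ allowed by Proposition~\ref{prop3}, the triangle inequality $d(r,u)+d(r,v)\ge p$ kills exactly the three with coordinate sum less than $p$, namely $(i-1,p-i-1)$, $(i-1,p-i)$ and $(i,p-i-1)$; removing $c(z) = (i,p-i)$ leaves five codes, so $\deg(z) \le 5$. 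Finally, (4) is immediate from Proposition~\ref{prop3}: any vertex admits at most nine candidate codes for its neighbors, one of which is its own, so $\Delta(G) \le 8$ by resolution.

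The main obstacle is the careful merging of constraints in (2), where Proposition~\ref{prop3}, part~(5) and Lemma~\ref{lem1} all feed into the count at once; in particular one has to verify the boundary behavior for small $p$, especially $p=2$, where an admissible code of the form $(2,\cdot)$ can coincide with $c(v) = (p,0)$ and must be accounted for without breaking the bound $\deg(z) \le 3$.
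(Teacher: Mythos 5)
Your proof is correct, and parts (1), (3) and (4) follow essentially the same lines as the paper: the collision of codes at the divergence point of two geodesics, the nine-candidate-code count with three codes killed by the triangle inequality plus the vertex's own code, and the $3^2-1$ bound (the paper cites Proposition~\ref{prop1}(2), which encodes exactly your nine-codes-minus-one argument). Where you genuinely diverge is in the order and mechanics of (2) and (5). The paper proves (2) first, by observing that the second coordinate of a neighbor of $u$ other than $u$ itself can take only two of the three values permitted by Proposition~\ref{prop3}, and then deduces (5) from (2) by enumerating the (at most three) neighbors of $z$ and forcing a code collision; its count in (2) silently assumes each admissible second coordinate is realized by at most one neighbor of $z$, which requires ruling out the first coordinate being $1$ --- a point the paper leaves implicit. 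You instead prove (5) directly from the code classification $\{(1,p-1),(1,p+1)\}$ of $N(u)$ established in Theorem~\ref{thm2}: two adjacent neighbors of $u$ would have second coordinates differing by $2$, contradicting Proposition~\ref{prop3}. You then feed (5) into (2) to force every neighbor of $z$ other than $u$ to have first coordinate $2$, after which Lemma~\ref{lem1} eliminates the code $(2,p)$ and resolution caps the count at two. This ordering is logically clean (no circularity, since your (5) depends only on Theorem~\ref{thm2}), makes explicit the case analysis the paper compresses, and correctly flags the boundary case $p=2$ where $v$ itself may appear among the neighbors of $z$; the paper's route is shorter but relies on the reader filling in why distinct neighbors of $z$ cannot share a second coordinate.
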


\begin{proof}
    (1)\ Suppose there are two geodesics $P$ and $Q$ between $u$ and
$v$. Then, clearly, there are two vertices $x, y$ on $P,Q$,
respectively, such that $u$ is equidistant from $x$ and $y$ and, as
a result, $c_{\{u,v\}}(x) = c_{\{u,v\}}(y)$, a contradiction to the
fact that $\{u,v\}$ is a resolving set for $G$.

    (2)\ Let $x$ be a neighbor of $u$ and $d(x,v) = t$, then $d(u,v)\in \{t-1,t,t+1\}$ by
Proposition \ref{prop3}. But, $d(u,v)\neq t$, for otherwise, $u$ and
$v$ do not form WTR-set for $G$, by Lemma \ref{lem1}. Thus $d(u,v) =
t-1$ or $t+1$. Assume, without any distress, that $d(u,v) = t-1$.
Then out of three possibilities for the second coordinate of the
codes of other neighbors of $x$, only two possibilities are left. It
follows that $x$ can have at most two more neighbors (as its one
neighbor is $u$) since $\{u,v\}$ is a WTR-set for $G$.

    (3)\ Let $y$ be a vertex on the unique geodesic $P$ between $u$ and $v$ such that
$y\not \in \{u,v\}\cup N(u)\cup N(v)$. Let $(a,b)$ be the code of
$y$, then $d(u,v) = a+b$. The possible codes for the neighbors of
$y$ are: $(a-1,b-1), (a-1,b), (a-1,b+1), (a,b-1), (a,b), (a,b+1),
(a+1,b-1), (a+1,b), (a+1,b+1)$. Out of these nine pairs, the pairs
$(a-1,b-1),(a-1,b),(a,b), (a,b-1)$ cannot be the code of any
neighbor of $y$. Otherwise, either the geodesic between $u$ and $v$
is not unique or $\{u,v\}$ is not a resolving set for $G$. It
follows that $d(y)\leq 5$.

    (4)\ Since $\dim_{wt}(G) = 2$, so the result follows by Proposition
\ref{prop1}.

    (5)\ By part (2), let $N(z) = \{u,i,j\}$ for any $z\in N(u)$. Since
the graphs considered in this paper are simple, we claim that
neither $i\sim u$ nor $j\sim u$. Let $d(z, v) = q$, then the
possible second coordinate of the code of $u$ is one of
$\{q-1,q+1\}$. Without loss of generality, assume that $q-1$ is the
second coordinate of the code of $u$, $q$ is the second coordinate
of the code of $i$ and $q+1$ is the second coordinate of the code of
$j$. Now, if $i\sim u$, then $c_{\{u,v\}}(z) = c_{\{u,v\}}(i)$, and
if $j\sim u$, then $d(j,v)$ would be $q$ rather than $q+1$ and hence
$c_{\{u,v\}}(z) = c_{\{u,v\}}(j)$.
\end{proof}

\begin{theorem}\label{thm4}
If $\dim_{wt}(G) = 2$, then $G$ cannot have a complete vertex of
degree more than three.
\end{theorem}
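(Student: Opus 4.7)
The plan is to argue by contradiction. Assume $\dim_{wt}(G)=2$ with WTMB $W=\{u,v\}$, and suppose for contradiction that there is a complete vertex $x\in V(G)$ with $d(x)\geq 4$. Since $x$ is complete, the set $C=\{x\}\cup N(x)$ induces a clique in $G$ of size at least $5$, and I plan to obtain a contradiction by showing that at most four distinct codes $c_W(\cdot)$ are available in $C$.

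The main tool is Proposition \ref{prop3}: whenever $y\sim z$, the codes $c_W(y)$ and $c_W(z)$ differ by at most one in each coordinate. Because every pair of vertices in the clique $C$ is adjacent, applying Proposition \ref{prop3} pairwise (with third vertex $u$) shows that the first coordinates of the codes of vertices in $C$ have pairwise differences at most $1$. Any set of integers with pairwise differences at most $1$ consists of at most two consecutive values, so the first coordinates attain at most two distinct values on $C$. Repeating the argument with $v$ gives the same bound for the second coordinate. Hence the codes of the vertices of $C$ lie inside a $2\times 2$ block, producing at most four distinct codes.

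On the other hand, $W$ is a resolving set, so the $\geq 5$ vertices of $C$ must have pairwise distinct codes. This contradicts the four-code bound, proving $d(x)\leq 3$ for every complete vertex $x$.

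I do not anticipate a real obstacle, since the pigeonhole estimate on a clique is purely local. The only subtlety worth noting is that $C$ could in principle intersect $W$; however, Theorem \ref{thm2} gives $d(u),d(v)\leq 2$, whereas every vertex of $C$ has degree at least $d(x)\geq 4$ (or $d(x)-1\geq 3$ for neighbors of $x$ when $d(x)=4$, but in fact each element of $N(x)$ has all of $N[x]\setminus\{y\}$ as neighbors, hence degree $\geq d(x)\geq 4$). Thus $W\cap C=\emptyset$, and the argument goes through cleanly without needing the stronger Lemma \ref{lem1} constraints between codes of vertices in $V(G)-W$ and codes of elements of $W$.
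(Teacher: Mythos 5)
Your proof is correct, and it shares the paper's opening move: a complete vertex $x$ with $d(x)\ge 4$ forces $K_5$ as a subgraph of $G$, and the contradiction comes from the fact that a graph with a resolving set of cardinality two cannot contain $K_5$. The difference is in how that last fact is handled. The paper simply cites Khuller {\em et al.} \cite{khuller1996} for it and stops; you instead prove it from scratch with the pigeonhole argument via Proposition \ref{prop3}: on a clique, each coordinate of the code takes at most two consecutive values, so at most four distinct codes are available for the five pairwise-adjacent vertices, contradicting resolvability. This makes your proof self-contained (and is essentially the standard proof of the cited fact), at the cost of a little more work; the paper's version is shorter but leans on an external reference. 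One small remark: your final paragraph worrying about whether $C$ intersects $W$ is unnecessary. The resolving-set condition forces pairwise distinct codes for \emph{all} vertices of $G$ (vertices of $W$ included, since each $w_i\in W$ is the unique vertex with $i$-th coordinate $0$), so the four-code bound on the clique already yields the contradiction regardless of whether $u$ or $v$ lies in $C$; the appeal to Theorem \ref{thm2} can be dropped.
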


\begin{proof}
Assume that $G$ has a complete vertex of degree more than three. The
$G$ has a complete graph of order $\geq 5$ as a subgraph. But, since
a WTR-set for $G$ of cardinality two is also a resolving set for $G$
and Khuller {\em et al.} in \cite{khuller1996} proved that any graph
having a resolving set of cardinality two cannot have $K_5$ as a
subgraph. It completes the proof.
\end{proof}

Since a graph having a resolving set of order two cannot have $K_5$
as a subgraph \cite{khuller1996}, so we have the following
straightforward consequences:

\begin{corollary}\label{cor2}
If $\dim_{wt}(G)=2$, then\\
(1)\ $G$ is at most 4-colorable,\\
(2)\ the clique number of $G$ is at most 4.
\end{corollary}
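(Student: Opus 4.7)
The plan is to treat the two parts separately, using two different ingredients already established in the paper. For part (1), I would set $m=\dim_{wt}(G)=2$ in Proposition \ref{prop1}(3), which immediately gives $\chi(G)\leq 2^{m}=4$, and this is exactly the $4$-colorability claim. For part (2), I would observe that any WTR-set is in particular a resolving set, so a WTMB of size $2$ witnesses $\dim(G)\leq 2$. The result of Khuller et al.\ quoted in the lead-in to the corollary then rules out $K_5$ as a subgraph of $G$, and since every clique of $G$ is a subgraph, the clique number is at most $4$.

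The only point that requires a bit of care, and that I would flag, is that part (1) does not actually follow from $K_5$-freeness alone — there exist $K_5$-free graphs of arbitrarily large chromatic number — so the sentence preceding the corollary is slightly misleading. One genuinely needs the Chappell--Gimbel--Hartman bound $\chi(G)\leq 2^{\dim(G)}$, combined with the trivial inequality $\dim(G)\leq \dim_{wt}(G)$ from (\ref{eq1}); both are already packaged into Proposition \ref{prop1}(3), so I would simply cite that.

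I do not foresee a real obstacle: each part is a one-line deduction once the correct earlier result is matched to the correct claim. The main bookkeeping issue is just making sure part (1) is attributed to Proposition \ref{prop1}(3) rather than to Theorem \ref{thm4}, and part (2) to Theorem \ref{thm4} (equivalently, the Khuller et al.\ $K_5$-exclusion).
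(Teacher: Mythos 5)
Your proposal is correct, and it is in fact more careful than the paper's own justification. The paper offers no separate proof of this corollary: the sentence immediately preceding it derives \emph{both} parts from the single fact (Khuller et al.) that a graph with a resolving set of size two has no $K_5$ subgraph. That argument is fine for part (2) --- no $K_5$ subgraph means clique number at most $4$, exactly as you argue --- but, as you correctly flag, it does not yield part (1): excluding $K_5$ as a subgraph does not bound the chromatic number (Mycielski-type constructions give even triangle-free graphs of arbitrarily large chromatic number). Your route for part (1), namely invoking Proposition \ref{prop1}(3) with $m=2$ to get $\chi(G)\le 2^2=4$ (which in turn rests on the Chappell--Gimbel--Hartman bound $\chi(G)\le 2^{\dim(G)}$ together with $\dim(G)\le\dim_{wt}(G)$), is the correct way to obtain the $4$-colorability claim and repairs the gap in the paper's lead-in. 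So: part (2) matches the paper's intended argument, while part (1) is a genuinely different (and necessary) derivation; what your approach buys is a proof that actually closes.
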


In the following result, we classify the graphs having weak total
metric dimension equals to the order of the graphs.

\begin{theorem}\label{thm5}
A graph $G$ of order $n\geq 2$ has $\dim_{wt}(G) = n$ if and only if
each vertex of $G$ is twin.
\end{theorem}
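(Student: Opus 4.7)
The plan is to prove both directions, using Remark \ref{rem1} (every WTR-set contains all twins) as the main tool for the backward direction, and a direct construction for the forward direction.

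\medskip

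\emph{Backward direction.} Suppose every vertex of $G$ is a twin. By Remark \ref{rem1}, any WTR-set $W$ for $G$ must contain every twin, hence $W = V(G)$. It remains to observe that $V(G)$ itself is a WTR-set: it is trivially resolving (each vertex is distinguished from every other vertex by itself, since $d(x,x)=0$), and the weak total condition is vacuous because $V(G)\setminus W = \emptyset$. Therefore $\dim_{wt}(G) = n$.

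\medskip

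\emph{Forward direction.} I will prove the contrapositive: if $G$ has some vertex $v$ that is not a twin, then $W := V(G)\setminus\{v\}$ is a WTR-set, so $\dim_{wt}(G)\le n-1 < n$. Since $v$ is not a twin, for every $w\in W$ we have $N(w)\setminus\{v\} \neq N(v)\setminus\{w\}$, so the symmetric difference $N(w)\bigtriangledown N(v)$ contains some vertex $x \notin \{w,v\}$. Then exactly one of $d(x,w)$ and $d(x,v)$ equals $1$, so $x$ resolves the pair $w,v$, and $x\in W\setminus\{w\}$.

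This immediately gives the weak total property for $W$ (the only vertex in $V(G)\setminus W$ is $v$, and each $w\in W$ is resolved from $v$ by some element of $W\setminus\{w\}$). The resolving property of $W$ also follows: any two vertices of $W$ are distinguished by either one of themselves, and any $w\in W$ is distinguished from $v$ by the $x$ just produced. Hence $W$ is a WTR-set of size $n-1$, completing the contrapositive.

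\medskip

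\emph{Main obstacle.} The only subtle point is producing, for each $w\in W$, a resolver of $w$ and $v$ that lies in $W\setminus\{w\}$; this is exactly where the hypothesis ``$v$ is not a twin'' is used, via a symmetric-difference argument on the neighborhoods of $w$ and $v$. Everything else is bookkeeping.
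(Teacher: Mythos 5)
Your proof is correct and follows essentially the same route as the paper: the backward direction rests on the twin observation (the paper shows directly that $V(G)-\{u\}$ fails to be a WTR-set, which is exactly the content of Remark \ref{rem1} that you invoke), and the forward direction shows $V(G)-\{x\}$ is a WTR-set for a non-twin $x$. Your symmetric-difference argument just makes explicit the paper's terser claim that the codes differ in at least two coordinates.
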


\begin{proof}
For $n = 2$, the result is trivial so we consider $n\geq 3$. Assume
that each vertex of $G$ is twin. Let $u$ be any arbitrary vertex of
$G$. Since $u$ is twin, there is a vertex $v\neq u$ in $G$ such that
$d(u,w) = d(v,w)$ for all $w\in V(G)-\{u,v\}$. It follows that
$V(G)-\{u\}$ is not a WTR-set for $G$. Hence, $\dim_{wt}(G) = n$.

Now, assume that $\dim_{wt}(G)=n$. If we suppose that a vertex $x$
is not twin in $G$, then the code of every element of the set
$V(G)-\{x\}$ with respect to this set differ by at least two
coordinates from the code of $x$ with respect to $V(G)-\{x\}$. It
follows that the set $V(G)-\{x\}$ is a WTR-set for $G$, a
contradiction.
\end{proof}

%%%--------------------------------------------------------------------
\section{Weak total resolving number and randomly weak total $k$-dimensional graphs}

Now, we proceed to characterize the graphs where the bounds $3\leq
res_{wt}(G)\leq n$ are achieved. First of all, we obtain the lower
bound.

\begin{proposition}\label{LowerBound}
For any graph $G$ of order $n\geq 3$, $res_{wt}(G)\geq 3$.
\end{proposition}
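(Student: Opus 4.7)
The plan is to exhibit a specific two-element subset of $V(G)$ that fails to be a WTR-set; this immediately forces $res_{wt}(G)>2$, hence $res_{wt}(G)\geq 3$. The natural candidate is the pair of endpoints of an edge, a choice suggested by Proposition \ref{prop4}, which already hints that viable WTR-pairs are non-adjacent.

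First, I would use connectivity together with $n\geq 3$ to produce vertices $a,b,c$ with $a\sim b$, $c\in V(G)-\{a,b\}$, and $c$ adjacent to $a$ or $b$; without loss of generality $c\sim b$. Such a $c$ must exist because if neither $a$ nor $b$ had a neighbor outside $\{a,b\}$, then $\{a,b\}$ would be a connected component of $G$ of size two, contradicting $n\geq 3$ and connectivity. Next, I would set $W=\{a,b\}$ and compute the codes with respect to $W$: $c_W(a)=(0,1)$, $c_W(b)=(1,0)$, and $c_W(c)=(d(c,a),1)$, where $d(c,a)\in\{1,2\}$ because $c\neq a$ and, by the triangle inequality, $d(c,a)\leq d(c,b)+d(b,a)=2$.

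Finally, I would split into the two cases and invoke Lemma \ref{lem1}. If $d(c,a)=1$, then $c_W(c)=(1,1)$ differs from $c_W(b)=(1,0)$ in exactly one coordinate; if $d(c,a)=2$, then $c_W(c)=(2,1)$ differs from $c_W(a)=(0,1)$ in exactly one coordinate. In either case, Lemma \ref{lem1} rules out $W$ being a WTR-set, so $\{a,b\}$ is a 2-set that is not a WTR-set, giving $res_{wt}(G)\geq 3$. There is no real obstacle here; the whole argument is a one-line code computation for a carefully chosen adjacent pair, and the only thing to be careful about is justifying the existence of the third vertex $c$ with the required adjacency.
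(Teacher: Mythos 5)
Your proposal is correct and follows essentially the same route as the paper: both exhibit an adjacent pair $\{a,b\}$ together with a third vertex adjacent to one of them and show this $2$-set fails to be a WTR-set. The paper argues directly from $d(x,y)=1=d(y,z)$ (the lone remaining landmark $y$ cannot resolve $x$ and $z$), while you reach the same conclusion through the code computation and Lemma \ref{lem1}, and you additionally justify the existence of the third vertex via connectivity, which the paper leaves implicit.
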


\begin{proof}
Let $x,y\in V(G)$ be two vertices such that $x\sim y$ and let $z\in
V(G)-\{x,y\}$ be a vertex such that $z\sim y$. Since
$d(x,y)=1=d(y,z)$, the set $\{x,y\}$ is not a WTR-set for $G$ and,
as a consequence, $res_{wt}(G)\geq 3$.
\end{proof}

\begin{theorem}\label{TwinsRES(G)=n}
Let $G$ be a graph of order $n\ge 3$. Then  $res_{wt}(G)=n$ if and
only if $G$ contains a twin.
\end{theorem}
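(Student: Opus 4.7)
The plan is to prove both directions by analyzing which $(n-1)$-subsets of $V(G)$ are WTR-sets. First observe that $V(G)$ itself is trivially a WTR-set (the condition on $V(G)-W$ is vacuous and $V(G)$ is a resolving set), so $res_{wt}(G)\le n$ always holds, and the claim $res_{wt}(G)=n$ is equivalent to the statement that at least one $(n-1)$-set fails to be a WTR-set.

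For the easy direction, assume $u,v$ are twins in $G$ and take $W=V(G)-\{v\}$, so $u\in W$ and $v\notin W$. The twin property gives $d(u,w)=d(v,w)$ for every $w\in V(G)-\{u,v\}=W-\{u\}$, so no vertex of $W-\{u\}$ resolves $u$ and $v$. Hence the WTR-condition fails for $u\in W$ against $v\in V(G)-W$, showing $W$ is not a WTR-set and therefore $res_{wt}(G)\ge n$.

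For the converse I will argue the contrapositive: if $G$ has no twin, then every $(n-1)$-set $W=V(G)-\{v\}$ is a WTR-set, forcing $res_{wt}(G)\le n-1$. Resolvability is immediate, since any two distinct vertices are either both in $W$ (and one of them resolves the pair) or one of them is $v$ (and then the other lies in $W$ and resolves them by a distance-$0$-versus-positive argument). For the WTR-condition, since $v$ is the only vertex outside $W$, I must exhibit, for each $w\in W$, a vertex in $W-\{w\}$ that resolves $w$ and $v$. Because $v$ is not a twin in $G$, the pair $(v,w)$ is not a twin pair for any $w\neq v$, and by the standard characterization (two distinct vertices are twins if and only if they have the same distance to every other vertex, since neighbors are exactly the vertices at distance $1$) there exists $w'\in V(G)-\{v,w\}=W-\{w\}$ with $d(v,w')\neq d(w,w')$, as required.

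The main subtlety worth flagging is the use of the equidistance characterization of twins in the converse: the definition in the paper is given in terms of equality of neighborhoods, while the WTR-condition naturally speaks in terms of distances. Passing between these two equivalent formulations is the single non-bookkeeping step, and once it is recorded the rest of the argument is direct. A minor point to note is that the construction works for any choice of $v\in V(G)$, so every $(n-1)$-set is automatically covered once the argument is carried out for a generic $v$.
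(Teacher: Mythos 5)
Your proof is correct and follows essentially the same route as the paper: a twin pair $u,v$ kills the $(n-1)$-set $V(G)-\{v\}$, and in the absence of twins every set $V(G)-\{v\}$ is a WTR-set because two non-twin vertices are distinguished by some third vertex (you phrase this via the equidistance characterization of twins, the paper via a vertex of $N(u)\bigtriangledown N(v)-\{u,v\}$; these are the same observation). The one step you assert without justification is that $res_{wt}(G)=n$ is \emph{equivalent} to the failure of some $(n-1)$-set: since $res_{wt}(G)$ is the least $r$ for which \emph{every} $r$-set is a WTR-set, you still owe the fact that no $r<n-1$ can work either; this follows because a non-WTR-set $W$ with $|W|\ge 2$ always contains a non-WTR-subset of size $|W|-1$ (if the weak-total condition fails for the pair $u\notin W$, $v\in W$, delete any vertex of $W-\{v\}$; if $W$ fails to resolve, every subset fails to resolve), whereas the paper sidesteps this by exhibiting a failing $k$-set for each $3\le k\le n-1$ and invoking Proposition \ref{LowerBound} for smaller sizes. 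The missing observation is true and easy, so the gap is minor.
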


\begin{proof}
If $G\cong K_n$ or $G\cong P_3$, then $res_{wt}(G)=n$ and we are
done. So we assume that $n\ge 4$ and  $G\not \cong K_n$.

Let $u$ be a twin in $G$, then there exists a vertex $v\neq u$ such
that $u$ and $v$ are twin vertices of $G$. For any $k\in
\{3,\ldots,n-1\}$, there exists a $k$-set of vertices $W$ such that
$u\in W$ and $v\not \in W$, and then for any $w\in W-\{u\}$,
$d(w,u)=d(w,v)$. Hence, $res_{wt}(G)=n$.

Now, we assume that $G$ has no twin and let $v\in V(G)$. In this
case for any $u\in V(G)-\{v\} $, there exists $w\in
N(u)\bigtriangledown N(v)-\{u,v\}$. Since  $d(w,u)\ne d(w,v)$, we
have that  $V(G)-\{v\}$ is a WTR-set and, as a consequence
$res_{wt}(G)\leq n-1$.
\end{proof}

From Theorem \ref{thm5}, we deduce the following consequence:

\begin{corollary}\label{cor3}
A graph $G$ of order $n$ is a randomly weak total $n$-dimensional
graph if and only if each vertex of $G$ is twin.
\end{corollary}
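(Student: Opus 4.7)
The plan is a direct two-step deduction that leans almost entirely on Theorem \ref{thm5}; the only auxiliary fact I need is the elementary sandwich
\[
\dim_{wt}(G) \le res_{wt}(G) \le n.
\]
The right-hand inequality holds because $W=V(G)$ is trivially a WTR-set: it is a resolving set (each vertex is the unique point at distance $0$ from itself), and the second clause of the WTR definition is vacuous since $V(G)-W=\emptyset$. The left-hand inequality holds because, by definition of $res_{wt}(G)$, every $r$-subset with $r=res_{wt}(G)$ is a WTR-set, so at least one WTR-set of size $res_{wt}(G)$ exists, forcing $\dim_{wt}(G)\le res_{wt}(G)$.

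For the forward direction, I would assume that every vertex of $G$ is a twin. Theorem \ref{thm5} then yields $\dim_{wt}(G)=n$, and the chain $n=\dim_{wt}(G)\le res_{wt}(G)\le n$ forces $res_{wt}(G)=n$ as well. Hence $\dim_{wt}(G)=res_{wt}(G)=n$, which is precisely the definition of a randomly weak total $n$-dimensional graph.

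For the converse, assume $G$ is randomly weak total $n$-dimensional. Then, by definition, $\dim_{wt}(G)=n$, so applying Theorem \ref{thm5} in the other direction gives immediately that each vertex of $G$ is a twin.

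I do not anticipate any real obstacle; the corollary is essentially a repackaging of Theorem \ref{thm5} together with the observation that $\dim_{wt}$ can never exceed $res_{wt}$. The only point deserving a line of care is justifying the upper bound $res_{wt}(G)\le n$ by checking that $V(G)$ is vacuously a WTR-set, which is why $res_{wt}(G)$ is even well-defined on every graph.
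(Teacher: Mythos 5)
Your proof is correct and matches the paper's intended argument: the paper derives this corollary directly from Theorem \ref{thm5}, with the same implicit sandwich $\dim_{wt}(G)\leq res_{wt}(G)\leq n$ that you spell out (and your care in noting that $V(G)$ is vacuously a WTR-set is a welcome, if routine, addition). Nothing further is needed.
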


Also, from Theorem \ref{TwinsRES(G)=n} and Corollary \ref{cor3}, we
conclude that a graph $G$ of order $n$ is a randomly weak total
$(n-1)$-dimensional graph if and only if $\dim_{wt}(G) = n-1$ and
$G$ has no twin.

In order to give a characterization of the graphs with
$res_{wt}(G)=3$, we present the following lemma:

\begin{lemma}\label{LemmaMaximumCommunNeigh}
If $res_{wt}(G) = k$, then every two vertices of $G$ have at most $k
- 2$ common neighbors.
\end{lemma}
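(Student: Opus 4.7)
The plan is to argue by contrapositive: assume that some pair of distinct vertices $u,v\in V(G)$ has a common neighborhood of size at least $k-1$, say $x_1,\ldots,x_{k-1}\in N(u)\cap N(v)$, and exhibit a $k$-subset of $V(G)$ that fails to be a WTR-set. This contradicts the hypothesis $res_{wt}(G)=k$, since the defining property of $res_{wt}(G)=k$ is that every $k$-subset of $V(G)$ is a WTR-set.

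The natural candidate is $W=\{u,x_1,\ldots,x_{k-1}\}$. First I would verify $|W|=k$: the $x_i$ are distinct by assumption, and each $x_i\ne u$ since $x_i\in N(u)$ in a simple graph. I would likewise check that $v\notin W$: plainly $v\ne u$, and $v\ne x_i$ because $x_i\sim v$ forbids $x_i=v$ in a simple graph. Then I would compute the codes with respect to $W$, obtaining $c_W(u)=(0,1,\ldots,1)$ and $c_W(v)=(d(u,v),1,\ldots,1)$, using that every common neighbor $x_i$ satisfies $d(v,x_i)=1=d(u,x_i)$. The value $d(u,v)$ is either $1$ (if $uv\in E(G)$) or $2$ (otherwise, realized by any $x_i$), so in either case $d(u,v)\ne 0$. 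Consequently $c_W(u)$ and $c_W(v)$ agree on every coordinate except the one indexed by $u$, i.e.\ they differ in exactly one coordinate.

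The finishing blow is Lemma \ref{lem1}: if $W$ were a WTR-set, then $W$ would be resolving and the codes of $u\in W$ and $v\in V(G)-W$ would have to differ in at least two coordinates. This directly contradicts the computation above, so $W$ is not a WTR-set, contradicting $res_{wt}(G)=k$. Therefore $|N(u)\cap N(v)|\le k-2$ for every pair of vertices $u,v$ of $G$.

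There is essentially no serious obstacle; the argument is a one-shot construction. The only points to be careful about are ensuring $|W|=k$ exactly and $v\notin W$ (both immediate from $G$ being simple), and invoking Lemma \ref{lem1} in the direction ``$W$ a WTR-set implies codes differ by at least two coordinates.''
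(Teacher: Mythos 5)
Your proposal is correct and follows essentially the same route as the paper: both take $W$ to be $u$ together with $k-1$ common neighbors of $u$ and $v$, observe that every $w\in W-\{u\}$ satisfies $d(w,u)=d(w,v)$ so the codes of $u$ and $v$ differ in at most one coordinate, and conclude via Lemma \ref{lem1} that $W$ is not a WTR-set, contradicting $res_{wt}(G)=k$. Your write-up merely adds the routine verifications that $|W|=k$ and $v\notin W$, which the paper leaves implicit.
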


\begin{proof}
We proceed by contradiction. Let $res_{wt}(G) = k$ and suppose that
there exist $u,v\in V(G)$ such that $|N(u)\cap N(v)|\ge k-1$. Let
$W$ be a set composed by $u$ and $k-1$ elements of $N(u)\cap N(v).$
Then for any $w\in W-\{u\}$, we have that $d(w,u)=d(w,v)$, which is
a contradiction because $|W|=k$ and $res_{wt}(G) = k$.
\end{proof}

\begin{theorem}\label{The case res_w=3}
Let $G$ be a graph. Then $res_{wt}(G) = 3$ if and only if $G$ is a
cycle graph of odd order or $G$ is a path of order greater than or
equal to three.
\end{theorem}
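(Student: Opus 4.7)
\medskip

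\noindent\textbf{Proof plan.} I will treat necessity and sufficiency separately, leaning on the two-coordinate criterion of Lemma~\ref{lem1}, the bound on common neighbors from Lemma~\ref{LemmaMaximumCommunNeigh} (which, for $k=3$, forces every pair of vertices to share at most one common neighbor), Theorem~\ref{TwinsRES(G)=n}, and the lower bound $res_{wt}(G)\ge 3$ of Proposition~\ref{LowerBound}.

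\medskip

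\noindent\textbf{Necessity.} Assume $res_{wt}(G)=3$. Direct inspection disposes of $n=3$, giving $G\in\{P_3,C_3\}$; for $n\ge 4$, Theorem~\ref{TwinsRES(G)=n} makes $G$ twin-free. The crux is to show $\Delta(G)\le 2$. Suppose for contradiction that some vertex $v$ has three neighbors $x,y,z$. Since $v$ is already a common neighbor of each pair from $\{x,y,z\}$, Lemma~\ref{LemmaMaximumCommunNeigh} forces $N(x)\cap N(y)=N(x)\cap N(z)=N(y)\cap N(z)=\{v\}$, so at most one of the edges $xy,xz,yz$ can occur (two would produce a second common neighbor). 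I split into two subcases. If $\{x,y,z\}$ is independent, then $d(x,y)=d(x,z)=d(y,z)=2$ via $v$, so $W=\{v,x,y\}$ gives $c_W(z)=(1,2,2)$ and $c_W(x)=(1,0,2)$, agreeing in two coordinates and violating Lemma~\ref{lem1}. If exactly one edge is present, say $xy$, then $W=\{v,x,z\}$ yields $c_W(y)=(1,1,2)$ and $c_W(x)=(1,0,2)$, again contradicting Lemma~\ref{lem1}. Hence $\Delta(G)\le 2$ and $G$ is a path or a cycle. Even cycles $C_{2m}$ with $m\ge 3$ are then excluded by the set $W=\{v_1,v_2,v_{m+1}\}$, for which $c_W(v_{2m})=(1,2,m-1)$ and $c_W(v_2)=(1,0,m-1)$ differ in a single coordinate, while $C_4$ is excluded because it contains twin vertices.

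\medskip

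\noindent\textbf{Sufficiency and the main obstacle.} For paths, I fix $W=\{v_i,v_j,v_k\}$ with $i<j<k$ and any $v_l\notin W$; the code $c_W(v_l)=(|l-i|,|l-j|,|l-k|)$ can match some $c_W(v_s)$ with $s\in\{i,j,k\}$ in two coordinates only when the two corresponding absolute-value equations in $l$ admit a common solution, and a short check shows the admissible solutions are always distinct, so Lemma~\ref{lem1} holds and $W$ is a WTR-set. For odd cycles, I would use vertex-transitivity and reflection symmetry to reduce to subsets of the form $\{v_0,v_b,v_c\}$ with $0<b<c\le 2m$; the decisive structural fact is that in $C_{2m+1}$ no pair of vertices is antipodal, so each distance is realized by a unique geodesic, tightly restricting which pairs of coordinates in two codes can coincide. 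I expect the hardest step to be this last verification, since the enumeration of two-coordinate collisions must be carried out against each element of $W$; however, the uniqueness of geodesics together with the explicit formula $d(v_i,v_j)=\min(|i-j|,(2m+1)-|i-j|)$ should keep each case to a short calculation.
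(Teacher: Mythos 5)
Your necessity direction is complete and correct, and it follows essentially the same route as the paper: you rule out $\Delta(G)\ge 3$ by examining the subgraph induced by three neighbors of a common vertex, and your use of Lemma~\ref{LemmaMaximumCommunNeigh} to force at most one edge among $\{x,y,z\}$ is a nice streamlining that collapses the paper's four induced-subgraph cases to two; your exhibited witness sets and codes check out, as does the exclusion of even cycles (the paper uses the antipodal pair $\{x,y,a\}$ uniformly for all even $n$, while you split off $C_4$ via twins, but both work).

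The genuine gap is in sufficiency for odd cycles: you state a plan (``I would use vertex-transitivity\dots I expect the hardest step to be this last verification'') but never carry out the enumeration of two-coordinate collisions, so that half of the equivalence is not actually proved. The gap is real but easy to close, and the paper closes it with one observation that also subsumes your path computation: in a path and in an odd cycle, \emph{every} pair of distinct vertices is a resolving set. Granting that, any $3$-set $W$ is a WTR-set for free, because $W$ itself resolves $G$ and, for each $w\in W$ and $v\notin W$, the pair $W\setminus\{w\}$ already resolves $w$ and $v$; Proposition~\ref{LowerBound} then gives $res_{wt}(G)=3$. The pair-resolving fact is exactly the algebraic identity underlying your path argument: $v_a$ fails to resolve $v_p,v_q$ iff $p+q=2a$ (for paths) or $p+q\equiv 2a \pmod{2m+1}$ (for $C_{2m+1}$), and two distinct values of $a$ cannot both satisfy this since $2$ is invertible modulo an odd number. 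If you replace your coordinate-by-coordinate collision analysis with this single observation, the odd-cycle case becomes a two-line calculation rather than the case enumeration you were bracing for, and the uniqueness-of-geodesics framing becomes unnecessary.
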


\begin{proof}
If $G$ is an odd cycle or a path of order $n\ge 3$, then any pair of
vertices is a resolving set. Hence, any set composed by three
vertices of $G$ is a WTR-set  and, by Proposition \ref{LowerBound},
we conclude that $res_{wt}(G)=3$.

On the other hand, if $G\cong C_n$, where $n$ is even, then for any
pair of antipodal vertices $x,y$ and the neighbors of $x$, say $a$
and $b$, we have that $d(a,x)=d(b,x)=1$ and
$d(a,y)=d(b,y)=\frac{n}{2}-1$, and then $\{x,y,a\}$ is not a WTR-set
for $G$. Thus,  $res_{wt}(G)\ne 3$.

It remains to show that if $res_{wt}(G)=3$, then $\Delta(G)\le 2$.
Suppose that there exist four different vertices $a,b,c,d\in V(G)$
such that $b,c,d\in N(a)$. In such a case, we differentiate the
following cases for the subgraph induced by the set $\{b,c,d\}$:
 \\
 \\
\noindent Case 1: $\langle \{b,c,d\} \rangle \cong N_3$ (an empty
graph). Since $d(b,c)=d(b,d)=2$ and $d(a,c)=d(a,d)=1$, we conclude
that $\{a,b,c\}$ is not a WTR-set for $G$.
 \\
 \\
\noindent Case 2: $\langle \{b,c,d\} \rangle \cong K_2\cup K_1$. We
assume, without loss of generality, that $c\sim d$. In this case,
$d(b,c)=d(b,d)=2$ and $d(a,c)=d(a,d)=1$ and so we conclude that
$\{a,b,c\}$ is not a WTR-set for $G$.
 \\
 \\
\noindent Case 3: $\langle \{b,c,d\} \rangle \cong P_3$. Now, we
assume, without loss of generality, that $c\not\sim d$. In this
case, $|N(a)\cap N(b)|\ge 2$ and, by Lemma
\ref{LemmaMaximumCommunNeigh}, we conclude that $res_{wt}(G)\ne 3$.
 \\
 \\
\noindent Case 4: $\langle \{b,c,d\} \rangle \cong K_3$. As above,
$|N(a)\cap N(b)|\ge 2$ and, by Lemma \ref{LemmaMaximumCommunNeigh},
we conclude that $res_{wt}(G)\ne 3$.

According to the above cases, we deduce that if $res_{wt}(G)=3$,
then $\Delta(G)\le 2$. Therefore, the result follows.
\end{proof}

It is easy to check that the two extremes of any path graph form a
WTR-set. Also, it was shown in \cite{javaid2012} that for any cycle
graph $\dim_{wt}(C_n)=3.$  Therefore, Theorem \ref{The case res_w=3}
leads to the following corollary:

\begin{corollary}\label{cor4}
A graph is randomly weak total $3$-dimensional if and only if it is
a cycle graph of odd order.
\end{corollary}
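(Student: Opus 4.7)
The plan is to read off the result as an immediate synthesis of Theorem \ref{The case res_w=3} with the two facts explicitly recorded just before the corollary statement: the two leaves of any path $P_n$ with $n\geq 3$ form a WTR-set, and $\dim_{wt}(C_n)=3$ for every cycle. Since being randomly weak total $3$-dimensional means $\dim_{wt}(G)=res_{wt}(G)=3$, both equalities have to be verified (or ruled out) on the short list of candidates supplied by Theorem \ref{The case res_w=3}.

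For the forward direction I would start from $res_{wt}(G)=3$, which by Theorem \ref{The case res_w=3} forces $G$ to be either an odd cycle or a path of order at least three. The path case is eliminated immediately: if $G\cong P_n$ with $n\geq 3$, then the two endpoints already form a WTR-set, so $\dim_{wt}(G)\leq 2$, contradicting the assumption $\dim_{wt}(G)=3$. Hence only odd cycles survive. For the converse, if $G\cong C_n$ with $n$ odd, then $res_{wt}(G)=3$ by Theorem \ref{The case res_w=3} and $\dim_{wt}(G)=3$ by the cited result of \cite{javaid2012}, so the two parameters coincide at $3$ and $G$ is randomly weak total $3$-dimensional.

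There is essentially no obstacle: the corollary is a pure bookkeeping step that intersects the classification of graphs with $res_{wt}(G)=3$ against the constraint $\dim_{wt}(G)=3$. The only point that deserves a brief sentence of justification is why $\dim_{wt}(P_n)=2$ rather than $3$, and this reduces to observing that the endpoints of $P_n$ indeed form a WTR-set (each non-endpoint has a unique code, and the endpoints' codes differ from every interior code in both coordinates), a remark already made in the paragraph preceding Corollary \ref{cor4}.
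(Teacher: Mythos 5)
Your proposal is correct and follows essentially the same route as the paper: the authors also derive the corollary by combining Theorem \ref{The case res_w=3} with the observations, stated in the paragraph immediately preceding it, that the two leaves of a path form a WTR-set (so $\dim_{wt}(P_n)=2$, eliminating paths) and that $\dim_{wt}(C_n)=3$ for cycles. Nothing further is needed.
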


The maximum degree of a graph according to its weak total resolving
number is investigated in the next result.

\begin{theorem}\label{thm8}
If $res_{wt}(G)=k$, then maximum degree of $G$ is at most
$2^{k-1}+k-1$.
\end{theorem}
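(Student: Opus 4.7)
The plan is to single out a vertex $v$ realizing $\Delta(G)$ and exhibit an explicit $k$-subset $W \ni v$ that forces the codes of $v$'s remaining neighbors into a small alphabet. If $\Delta(G) \leq k-1$ the inequality $\Delta(G) \leq 2^{k-1} + k - 1$ is immediate, so I will assume $\Delta(G) \geq k$ and pick $k-1$ neighbors $w_1, \ldots, w_{k-1}$ of $v$, setting $W = \{v, w_1, \ldots, w_{k-1}\}$. Because $res_{wt}(G) = k$ and $|W| = k$, the set $W$ is automatically a WTR-set for $G$, and in particular a resolving set.

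The counting step is the heart of the argument. For each $x \in N(v) \setminus W$, the code $c_W(x) = (d(x, v), d(x, w_1), \ldots, d(x, w_{k-1}))$ has first coordinate $d(x, v) = 1$. For the remaining coordinates I will invoke Proposition \ref{prop3} on the edge $xv$ with third vertex $w_i$: from $d(v, w_i) = 1$ it follows that $d(x, w_i) \in \{0, 1, 2\}$, and $x \neq w_i$ rules out $0$, leaving $d(x, w_i) \in \{1, 2\}$. Consequently $c_W(x) \in \{1\} \times \{1, 2\}^{k-1}$, a set of cardinality $2^{k-1}$.

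Finally, because $W$ is resolving, the codes $c_W(x)$ for distinct $x \in N(v) \setminus W$ must be pairwise distinct (they are also automatically distinct from $c_W(v) = (0, 1, \ldots, 1)$ and from each $c_W(w_i)$, which contains a $0$-coordinate), hence $|N(v) \setminus W| \leq 2^{k-1}$. Combined with $|W \cap N(v)| = k-1$ this yields $\Delta(G) = |N(v)| \leq 2^{k-1} + k - 1$. I do not anticipate any genuine obstacle; the single nontrivial modeling choice is putting $v$ itself into $W$ so that the first coordinate detects membership in $N(v)$, while filling the remaining $k-1$ slots with neighbors of $v$ so that Proposition \ref{prop3} forces every other coordinate to be at most $2$.
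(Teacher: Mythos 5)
Your proposal is correct and follows essentially the same argument as the paper: place a maximum-degree vertex together with $k-1$ of its neighbors into a $k$-set, use $res_{wt}(G)=k$ to conclude it is a (weak total) resolving set, and count the at most $2^{k-1}$ possible codes of the remaining neighbors. Your explicit handling of the degenerate case $\Delta(G)\le k-1$ and the explicit appeal to Proposition \ref{prop3} are minor tidy-ups of the same proof.
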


\begin{proof}
Let $u$ be a vertex of $G$ with $d(u) = \Delta(G)$ and let a set $U
= \{u, u_1,u_2,\ldots,u_{k-1}\}$ of order $k$, where
$u_1,u_2,\ldots,u_{k-1}\in N(u)$. Since $res_{wt}(G)=k$, so $U$ is a
WTR-set for $G$. Indeed for $x\in N(u)-U$, $d(x,u)=1$ and $d(x,u_i)
= 1$ or $2$ $(1\leq i\leq k-1)$. It follows that the maximum number
of distinct codes with respect to $U$ for the elements of $N(u)-U$
is $2^{k-1}$. Thus $|N(u)-U|\leq 2^{k-1}$ and, as a consequence,
$\Delta(G) = d(u)\leq 2^{k-1}+k-1$.
\end{proof}

The next result gives the realization of weak total metric dimension
and weak total resolving number of some connected graphs.

\begin{theorem}\label{thm9}
For every two natural numbers $a,b$ with $3\leq a\leq b$, there
exists a graph $G$ such that $\dim_{wt}(G) = a$ and $res_{wt}(G) =
b$.
\end{theorem}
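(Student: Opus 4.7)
The plan is to give an explicit construction of a graph of order $b$, splitting on whether $a=b$ or $a<b$. The guiding observation is Theorem~\ref{TwinsRES(G)=n}: if $|V(G)|=b$ and $G$ contains any twin, then $res_{wt}(G)=b$ automatically, so the task reduces to producing a $b$-vertex graph with at least one twin and the required value of $\dim_{wt}$.

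For $a=b$, take $G=K_a$: every vertex is a twin, so Theorem~\ref{thm5} gives $\dim_{wt}(K_a)=a$ and Theorem~\ref{TwinsRES(G)=n} gives $res_{wt}(K_a)=a$. For $a<b$, set $k=b-a+1\ge 2$ and let $G$ be the tree obtained from a path $v_1v_2\cdots v_k$ by attaching $a-1$ additional leaves $u_1,\ldots,u_{a-1}$ at $v_k$. Then $|V(G)|=b$, the $u_i$ are pairwise twins, and Theorem~\ref{TwinsRES(G)=n} gives $res_{wt}(G)=b$. For the upper bound $\dim_{wt}(G)\le a$ I would test $W=\{u_1,\ldots,u_{a-1},v_1\}$, computing the code of $v_j$ with respect to $W$ as $(k-j+1,\ldots,k-j+1,\,j-1)$ and checking via Lemma~\ref{lem1} that every code outside $W$ differs from every code inside $W$ in at least two coordinates. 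For the matching lower bound, Remark~\ref{rem1} forces every WTR-set to contain $u_1,\ldots,u_{a-1}$; if $k=2$ then $v_1$ is itself a leaf of $v_k$ and hence another twin, so $a$ vertices are already forced, while if $k\ge 3$ the set $\{u_1,\ldots,u_{a-1}\}$ by itself fails Lemma~\ref{lem1} because the code $(2,\ldots,2)$ of $v_{k-1}$ differs from the code $(2,\ldots,0,\ldots,2)$ of $u_i$ in the single coordinate $i$, forcing at least one additional element.

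I expect the main technical step to be the upper-bound verification, which requires tracking, for each $w\in W$ and each $v\in V(G)-W$, which coordinate supplies the second difference in Lemma~\ref{lem1}. Because the $u_i$-coordinates of every $v_j$ are identical, the $v_1$-coordinate must be the one separating $v_j$ from each $u_i$, whereas any single $u_i$-coordinate separates $v_j$ from $v_1$. I would split the verification into the two subcases $w=u_i$ and $w=v_1$ and confirm the two-coordinate separation in each, noting that the degenerate star case $b=a+1$ (where $G\cong K_{1,a}$ and $W$ is exactly the set of leaves) falls out of the same argument with $j=k=2$.
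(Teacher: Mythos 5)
Your proposal is correct, and it reaches the same two key ingredients as the paper — Theorem~\ref{TwinsRES(G)=n} to force $res_{wt}(G)=b$ from the mere presence of a twin in a $b$-vertex graph, and Remark~\ref{rem1} plus Lemma~\ref{lem1} to pin down $\dim_{wt}(G)=a$ — but it packages the construction quite differently. The paper splits the range $a<b$ into three sub-cases ($a=3$ with a triangle plus pendant path; $a=b-1$ with a star; $4\le a\le b-2$ with $K_4-e$ plus leaves and a path), each engineered so that the set of twins is itself a WTR-set of size exactly $a$, making the lower bound an immediate application of Remark~\ref{rem1}. You instead use a single uniform family, the broom obtained by attaching $a-1$ leaves to one end of $P_{b-a+1}$ (which is precisely the graph the paper already uses in Theorem~\ref{thm6}), and observe that its twin leaves give $res_{wt}(G)=b$ for free. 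The price of this unification is that for $k\ge 3$ the twins supply only $a-1$ forced vertices, so you need the extra observation that $\{u_1,\ldots,u_{a-1}\}$ alone violates Lemma~\ref{lem1} at $v_{k-1}$, together with the explicit code computation for $W=\{u_1,\ldots,u_{a-1},v_1\}$; both of these check out (the $v_1$-coordinate and the $u_i$-coordinate always supply the two required differences). Your route is arguably cleaner — one construction instead of three, and it makes visible that Theorem~\ref{thm9} is essentially Theorem~\ref{thm6} plus Theorem~\ref{TwinsRES(G)=n} once the realizing graph is chosen to contain a twin — while the paper's route avoids any direct code verification for the lower bound by making the twin set itself the basis.
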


\begin{proof}
For $a = b$. Let $G$ be a graph of order $b$ in which each vertex is
twin. Then $\dim_{wt}(G) = a = b = res_{wt}(G)$.

For $a = 3$ and $b\geq 4$. Consider the complete graph $K_3$ with
vertex set $\{s,t,u\}$ and the path $P_{b-a+1}$ with one leaf called
$l$. Make the graph $G$ of order $b$ by identifying the leaf $l$ of
$P_{b-a+1}$ with the vertex $u$ of $K_3$. Note that, $G$ has two
twins $s, t$ and $s\sim t$. Since only two adjacent vertices do not
form WTR-set for $G$, by Propositions \ref{prop4}, so
$\dim_{wt}(G)\geq 3$. Clearly, the vertex set of $K_3$ is a WTR-set
for $G$ and, as a result, $\dim_{wt}(G) = a$. Moreover $res_{wt}(G)
= b$, by Theorem \ref{TwinsRES(G)=n}.

For $4\leq a = b-1$. Take a vertex $v$ and attach $a$ leaves
$l_1,l_2,\ldots,l_a$ by an edge with $v$. The resultant graph $G$ is
a star graph and has order $a+1$. Since all the leaves are twins and
the collection of all these twins forms a WTR-set for $G$, so Remark
\ref{rem1} and Theorem \ref{TwinsRES(G)=n} yield that $\dim_{wt}(G)
= a$ and $res_{wt}(G) = a+1$.

For $4\leq a\leq b-2$. Consider the graph $K_4-e$ (obtained by
deleting one edge $e$ from $K_4$) with vertex set $\{w,x,y,z\}$ and
$e = y\sim z$. Attach $a-2$ leaves $l_1,l_2,\ldots,l_{a-2}$ by an
edge with the vertex $w$ of $K_4-e$. Also, identify a leaf $l$ of
the path $P_{b-a-1}$ with the vertex $x$ of $K_4-e$ ($P_1\cong K_l$
with unique vertex $l$). Call the resultant graph $G$. Clearly,
order of $G$ is $b$. Since $y,z$ and all the leaves
$l_1,l_2,\ldots,l_{a-2}$ are twins in $G$ and the set
$\{y,z,l_1,l_2,\ldots,l_{a-2}\}$ is a WTR-set for $G$, it follows
that $\dim_{wt}(G) = a$ and $res_{wt}(G) = b$, by Remark \ref{rem1}
and Theorem \ref{TwinsRES(G)=n}.
\end{proof}
\section{Weak total metric dimension and weak total resolving number
of a non-path tree}
%Editions by Murtaza
Let $G$ be a non-path tree (we call a tree which is not a path, a
non-path tree) and $\{v_k: 1\le k\le ex(G) \}$ be the set of
exterior major vertices of $G$. If two or more paths start from an
exterior major vertex $v_k$ and end at different terminal vertices
of $v_k$, then they are called the \textit{branches} rooted at
$v_k$. Let $P_{i}^k$ : $u_{i,1}^k=v_k,u_{i,2}^k,...,u_{i,l_i^k}^k$
$(1\le i \le t_k)$ be the $t_k\ge 2$ branches of $v_k$ where $l_i^k$
is the number of vertices in branch $P_{i}^k$ and indices $i,j,k$ in
$u_{i,j}^k$ represent that vertex is at $j^{th}$ position in
$i^{th}$ branch $P_i^k$ of $k^{th}$ exterior major vertex $v_k$. In
our later discussion, absence of index $k$ in $P_i^k$, $u_{i,j}^k$
and $t_k$ means that only one exterior major vertex $v$ is under
consideration. For our convenience, we label branches $P_i^k$ in
ascending order of $i$ so that for any two branches $P_r^k$,
$P_s^k$, $r<s$ if $l_r^k\le l_s^k$. If $l_i^k=l^k$ $(1\le i \le
t_k)$ and $t_k\ge 2$, then branches $P_{i}^k$ are called
\textit{similar branches} of $v_k$ each of length $l^k-1$. If
$l_i^k=2$ $(1\le i \le t_k)$ and $t_k\ge 2$ for an exterior major
vertex $v_k$, then similar branches of $v_k$ are called \textit{twin
leaves}. Let $P_i^k$ and $P_s^k$ be two similar branches of an
exterior major vertex $v_k$, then vertex $u_{i,j}^k$ in $P_i^k$ is
at same position as $u_{s,j}^k$ in $P_s^k$. We name another branch
which plays an important role in finding WTMB of a non-path tree. A
branch $P_1^k$ of an exterior major vertex $v_k$ is called \emph{the
unique branch of shortest length} if $l_1^k<l_i^k$ for all $(2\le
i\le t_k)$. In fact, if an exterior major vertex has twin leaves,
then it does not have the unique branch of shortest length.

\begin{remark}\label{ThrmTerminalPathResolvingSet}
Let $G$ be a non-path tree and $v$ be an exterior major vertex of
$G$ with $t\ge 2$ branches $P_i$ $(1\le i \le t)$. If a set
$W\subseteq V(G)$ is a resolving set for $G$, then $W$ contains at
least one vertex other than $v$ from at least $t-1$ branches of $v$.
Let $W\cap \{V(P_{t-1})\cup V(P_{t})\}=\emptyset$, then the code of
$u_{t-1,1}$ and $u_{t,1}$ with respect to $W$ become same as these
two vertices are at the same distance from vertices of $W$, a
contradiction that $W$ is a resolving set.
\end{remark}

\begin{proposition}\label{prop2}
Let $v$ be an exterior major vertex of a graph $G$ and let $P_r$,
$P_s$ be two branches of $v$ with $l_r$ ,$l_s$ number of vertices
respectively and $l_r<l_s$. If $W$ is a WTR-set of $G$ and $W$
contains a vertex from $P_r$ other than $v$, then $W$ must contains
at least one vertex from $P_s$ other than $v$.
\end{proposition}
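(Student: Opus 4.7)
The plan is to argue by contradiction, using Lemma~\ref{lem1}. Suppose $W$ is a WTR-set for $G$ and $u_{r,k}\in W$ for some $2\le k\le l_r$, but $W\cap(V(P_s)-\{v\})=\emptyset$. The aim is to exhibit a vertex $x\in V(G)-W$ such that $c_W(x)$ and $c_W(u_{r,k})$ differ in at most one coordinate, contradicting Lemma~\ref{lem1}.

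Set $x:=u_{s,k}$. Since $k\le l_r<l_s$, the vertex $u_{s,k}$ lies on $P_s$, and by hypothesis it is not in $W$. The key geometric observation is that, in the tree $G$, any shortest path from $u_{r,k}$ or from $u_{s,k}$ to a vertex outside $V(P_r)\cup V(P_s)$ must pass through the exterior major vertex $v$. Hence, for every $w\in W$ that is $v$ or lies outside $V(P_r)\cup V(P_s)$, one has $d(u_{r,k},w)=(k-1)+d(v,w)=d(u_{s,k},w)$, so those coordinates of the two codes agree. Since $W\cap(V(P_s)-\{v\})=\emptyset$, any remaining discrepancy between $c_W(u_{r,k})$ and $c_W(u_{s,k})$ must come from coordinates indexed by $W\cap(V(P_r)-\{v\})$.

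At $w=u_{r,k}$ itself the two distances are $0$ and $2(k-1)\ne 0$, forcing one discrepancy. In the clean subcase $W\cap(V(P_r)-\{v\})=\{u_{r,k}\}$ this is the only discrepancy, so the codes of $u_{s,k}$ and $u_{r,k}$ differ in exactly one coordinate, contradicting Lemma~\ref{lem1} and finishing the argument. The main obstacle is the remaining subcase, where $W\cap(V(P_r)-\{v\})$ contains additional vertices $u_{r,i}$ with $i\ne k$: comparing $|k-i|$ with $(k-1)+(i-1)$ (equality of which would force $\min\{k,i\}\le 1$) shows the coordinate at each such $w$ also disagrees, so the witness $u_{s,k}$ alone may produce two or more discrepancies and Lemma~\ref{lem1} is not immediately violated. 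To close this case I would refine the witness---pick the deepest index with $u_{r,k}\in W$ and pair $u_{r,k}$ with either $u_{r,k+1}\notin W$ (when $k<l_r$) or a vertex $u_{s,k'}$ chosen so that the only $W$-coordinate on $P_r$ that can still distinguish them is the one at $u_{r,k}$ itself---and combine this with Remark~\ref{ThrmTerminalPathResolvingSet} to control how $W$ can meet the branches of $v$. Verifying that such a refined witness always exists and leaves exactly one disagreeing coordinate is the part I expect to be the most delicate.
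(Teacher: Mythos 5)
Your setup and your ``clean subcase'' are exactly the paper's proof: the paper uses the same witness $u_{s,j}$ and simply asserts that $d(u_{s,j},w)=d(u_{r,j},w)$ for \emph{all} $w\in W\setminus\{u_{r,j}\}$, so that the two codes differ in one coordinate and Lemma~\ref{lem1} applies. You are right to distrust that assertion: it fails as soon as $W$ contains a second vertex $u_{r,i}$ of $V(P_r)-\{v\}$, because $d(u_{r,j},u_{r,i})=|j-i|$ while $d(u_{s,j},u_{r,i})=(j-1)+(i-1)$, and these agree only when $\min\{i,j\}=1$. So the obstacle you isolate is not a shortcoming of your write-up relative to the paper; it is a gap in the paper's own argument, which silently treats $u_{r,j}$ as the only element of $W$ on $V(P_r)-\{v\}$.

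The bad news is that the repair you sketch cannot succeed, because the statement is false in precisely the case you could not close. Take the tree with a single exterior major vertex $v$ and three branches $A: v,a_1,a_2$; $B: v,b_1,b_2,b_3$; $C: v,c_1,c_2,c_3$, and put $P_r=A$ ($l_r=3$), $P_s=B$ ($l_s=4$). The set $W=\{v,a_1,a_2,c_1,c_2,c_3\}$ is a resolving set, and every $b_k$ has a code differing from the code of every element of $W$ in at least two coordinates (the tight cases are $c_W(b_1)=(1,2,3,2,3,4)$ against $c_W(a_1)=(1,0,1,2,3,4)$, and $c_W(b_2)$ against $c_W(a_2)$, each differing in exactly the two coordinates indexed by $a_1$ and $a_2$); so $W$ is a WTR-set meeting $V(P_r)-\{v\}$ but not $V(P_s)-\{v\}$. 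Your proposed refined witnesses are also doomed: $u_{r,k}$ and $u_{r,k+1}$ are adjacent vertices of a tree, so their distances to \emph{every} vertex differ by parity and Lemma~\ref{lem1} yields nothing, while any same-depth pair $(u_{s,j},u_{r,j})$ differs in exactly $|W\cap(V(P_r)-\{v\})|$ coordinates by your own computation. The proposition is true, and your argument is complete, exactly under the additional hypothesis $|W\cap(V(P_r)-\{v\})|=1$; in the generality stated it needs a minimality assumption on $W$ (it is only ever invoked for a WTMB later in the paper), not a cleverer witness.
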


\begin{proof}
Suppose $u_{r,j}\in W\cap V(P_r)$ for some fix $j$; $1\le j\le l_r$
and $W\cap V(P_s)=\emptyset$. As $d(u_{s,j},w)=d(u_{r,j},w)$ for all
$w\in W\setminus \{u_{r,j}\}$, therefore the code (with respect to
$W$) of the vertex $u_{s,j}\in P_s$ lying at the $jth$ position in
$P_s$, differ by one coordinate only from the code (with respect to
$W$) of $u_{r,j}\in W$ and, as a consequence, $W$ is not a WTR-set
for $G$, by Lemma \ref{lem1}.
\end{proof}

\begin{theorem}\label{thm WTMB with Uniqu Branch}
Let $G$ be a non-path tree and $W$ be a WTMB of $G$. Let $v$ be an
exterior major vertex of $G$ with $t$ branches $\{P_i: 1\le i\le
t\}$ in which $P_1$ is the unique branch of shortest length with
$l_1$ number of vertices. Then $W$ must contains vertices
$u_{i,j}\in P_i$ for each $i$ $(2\leq i \leq t)$ and exactly one $j$
$(j>l_1)$.
\end{theorem}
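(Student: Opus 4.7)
The plan is to split the statement into two assertions and argue them separately: (a) for each $i \in \{2, \ldots, t\}$, the set $W \cap (P_i - \{v\})$ is non-empty, and (b) any vertex $u_{i,j}$ in $W \cap (P_i - \{v\})$ (for $i \geq 2$) satisfies $j > l_1$, and moreover $W$ contains exactly one such vertex in each $P_i$. Assertion (a) will be a direct consequence of Remark \ref{ThrmTerminalPathResolvingSet} and Proposition \ref{prop2}; assertion (b) will rely on Lemma \ref{lem1} together with a minimality-based exchange argument.

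For (a), Remark \ref{ThrmTerminalPathResolvingSet} tells us that $W$, being a resolving set, meets at least $t - 1$ of the branches $P_1, \ldots, P_t$ outside $v$. If $W \cap (P_1 - \{v\}) \neq \emptyset$, then Proposition \ref{prop2}, applied with $P_r = P_1$ and $P_s = P_i$ (since $l_1 < l_i$ for every $i \geq 2$), forces $W \cap (P_i - \{v\}) \neq \emptyset$ for each such $i$. If instead $W \cap (P_1 - \{v\}) = \emptyset$, then the $t - 1$ branches meeting $W$ outside $v$ must be exactly $P_2, \ldots, P_t$. Either case yields (a).

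For (b), suppose for contradiction that $u_{i, j} \in W$ with $2 \leq i \leq t$ and $2 \leq j \leq l_1$. The vertex $u_{1, j}$ in the corresponding position of $P_1$ exists because $j \leq l_1$. Any shortest path from a vertex $w \notin (P_1 \cup P_i) - \{v\}$ to either $u_{1, j}$ or $u_{i, j}$ must pass through $v$, so $d(w, u_{1, j}) = d(w, u_{i, j})$; hence $c_W(u_{1, j})$ and $c_W(u_{i, j})$ can differ only at coordinates indexed by $W \cap ((P_1 \cup P_i) - \{v\})$. Assuming first that $u_{1, j} \notin W$, Lemma \ref{lem1} forces $|W \cap ((P_1 \cup P_i) - \{v\})| \geq 2$. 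I will then construct a strictly smaller WTR-set by deleting $u_{i, j}$ (and, if needed, replacing it with the terminal vertex $u_{i, l_i}$, which lies at position $l_i > l_1$); the verification uses the fact that $u_{i, l_i}$, together with the analogous far vertices from the other branches $P_s$ ($s \neq 1, i$), already supplies at least two differing code coordinates in every remaining pair, contradicting the minimality of $W$. If instead $u_{1, j} \in W$, then by minimality some index $k$ with $2 \leq k \leq l_1$ must have $u_{1, k} \notin W$ (otherwise $W$ would contain all of $P_1 - \{v\}$, itself non-minimal), and the preceding argument applied at $k$ yields the same contradiction. Uniqueness of the vertex in $P_i$ follows by a parallel exchange: any second vertex of $W$ in $P_i - \{v\}$ at position $> l_1$ can be removed while preserving WTR, again violating minimality.

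The main obstacle is the exchange argument itself: producing the explicit smaller set $W'$ and verifying that Lemma \ref{lem1} still holds for every pair $(x, w)$ with $x \notin W'$ and $w \in W'$. The key structural ingredient is that $u_{i, l_i}$ sits at a position $l_i > l_1$, so its code entry distinguishes vertices of $P_i$ from vertices of $P_1$ by a strictly positive margin; combined with the entries coming from the terminal vertices of the other branches $P_s$, this ensures the two-coordinate code separation demanded by Lemma \ref{lem1}.
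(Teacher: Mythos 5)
Your part (a) --- that $W$ must meet each of $P_2,\dots,P_t$ outside $v$ --- is fine and is essentially the paper's own opening move (Remark \ref{ThrmTerminalPathResolvingSet} plus Proposition \ref{prop2}). The gap is in part (b), which is where the content of the theorem lives: there your text is a plan rather than a proof. You correctly observe that $c_W(u_{1,j})$ and $c_W(u_{i,j})$ can differ only at coordinates indexed by $W\cap\bigl((P_1\cup P_i)-\{v\}\bigr)$, so Lemma \ref{lem1} forces a second $W$-vertex there; but the step that is supposed to produce the contradiction --- ``I will then construct a strictly smaller WTR-set by deleting $u_{i,j}$ (and, if needed, replacing it with $u_{i,l_i}$)'' --- is never carried out, and you yourself flag its verification as ``the main obstacle.'' Moreover, as announced it cannot deliver the conclusion: if you delete $u_{i,j}$ \emph{and} insert $u_{i,l_i}$, the resulting set has the same cardinality as $W$, so even if it is a WTR-set nothing contradicts the minimality of $W$. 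A contradiction requires exhibiting a WTR-set of cardinality $|W|-1$, which means checking the two-coordinate condition of Lemma \ref{lem1} for every pair involving the deleted vertex; none of that is done. The subcase $u_{1,j}\in W$ is likewise unsubstantiated (``otherwise $W$ would contain all of $P_1-\{v\}$, itself non-minimal'' is an assertion, not an argument), as is the final ``exactly one'' claim, which again rests on an unverified deletion.

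For comparison, the paper avoids the exchange argument altogether by splitting on whether $W\cap V(P_1)$ is empty. If it is nonempty, Proposition \ref{prop2} forces $W$ to meet all $t$ branches, so $W$ uses at least $t$ vertices near $v$ where $t-1$ suffice (one vertex at position $j>l_1$ from each $P_i$, $i\ge 2$, already yields the two-coordinate separation), contradicting minimality; if it is empty, a $W$-vertex $u_{i,j}$ with $j\le l_1$ is excluded by the one-coordinate comparison with $u_{1,j}$, and minimality gives ``exactly one.'' Your restructuring does expose a genuine subtlety that the paper glosses over (the one-coordinate claim tacitly assumes no other $W$-vertices lie in $P_1\cup P_i$), but you do not resolve it; until the deletion argument is written out and verified, the proof is incomplete.
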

\begin{proof}
Since $W$ is a resolving set for $G$ so by Remark
\ref{ThrmTerminalPathResolvingSet}, $W$ contains at least one vertex
from at least $t-1$ branches of $v$. We start by choosing $W\cap
V(P_1)=\emptyset$, i.e., $W$ does not contain any vertex from $P_1$,
then $W$ contains at least one vertex from each $P_i$ $(2\le i\le
t)$. If we take $u_{i,j}\in W\cap V(P_i)$ for some $i$ $(2\le i \le
t)$ and some $j$ where $j\le l_1$, then $d(u_{1,j},w)=d(u_{i,j},w)$
for all $w\in W\setminus \{u_{i,j}\}$, so the code of $u_{1,j}\in
P_1$ and the code of $u_{i,j}\in W$ with respect to $W$, differ by
one coordinate only, which is a contradiction that $W$ is a WTMB of
$G$. Thus for $u_{i,j}\in W\cap V(P_i)$, $j$ must be greater than
$l_1$. Also $W$ is a WTMB of $G$ if $|W\cap V(P_i)|$ is minimum
which is possible only when we take exactly one vertex from each
$P_i$ $(2\le i \le t)$. Thus if $W\cap V(P_i)=\{u_{i,j}\in P_i:$ for
each $i$ $(2\le i \le t)$ and exactly one $j$; $j>l_1\}$, then
$|W\cap V(P_i)|=t-1$ which is minimum and the codes of vertices of
$P_1$ differ by at least two coordinates from the codes of vertices
of $W$ with respect to $W$. Moreover if we choose $W \cap V(P_1)\ne
\emptyset$, then by Proposition \ref{prop2}, $|W\cap \{V(P_i): (1\le
i\le t) \}|=t$ as $l_1<l_i$ for all $(2\le i \le t)$. Thus $|W\cap
V(P_i)|$ is not minimum, which is contradiction that $W$ is a WTMB
of $G$.
\end{proof}
\begin{theorem}\label{thm WTMB without Uniqu Branch}
Let $G$ be a non-path tree and $W$ be a WTMB of $G$. Let $v$ be an
exterior major vertex of $G$ with $t$ branches $\{P_i: 1\le i\le
t\}$ and $v$ does not have the unique branch of shortest length.
Then $W$ must contains vertices $u_{i,j}\in P_i$ for each $i$
$(1\leq i \leq t)$ and exactly one $j$ $(2\le j\le l_i)$.
\end{theorem}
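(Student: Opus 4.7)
The plan is to mirror the argument of Theorem \ref{thm WTMB with Uniqu Branch}, adapted to the present setting where at least two branches of $v$ share the shortest length; by the labelling convention this forces $l_1=l_2$. I will split the proof into two parts: first, every branch of $v$ contains at least one vertex of $W$ distinct from $v$; second, minimality of $|W|$ forces exactly one such vertex per branch.

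For the first part I argue by contradiction. Suppose some branch $P_i$ has $W\cap(V(P_i)\setminus\{v\})=\emptyset$. Remark~\ref{ThrmTerminalPathResolvingSet} shows that at most one branch of $v$ can miss $W$, so every other branch contributes, and I may further reduce (invoking the minimality of $|W|$ through a replacement/shrinking step) to the case where each of these other branches contains exactly one $W$-vertex. Pick $r\ne i$ with $l_r\le l_i$: if $i\ge 2$ take $r=1$, and if $i=1$ take $r=2$, using the hypothesis that $v$ lacks a unique shortest branch so $l_2=l_1=l_i$. Let $u_{r,m}\in W$ be the sole $W$-vertex of $P_r$, with $m\ge 2$. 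When $l_r<l_i$, Proposition~\ref{prop2} immediately gives a $W$-vertex in $P_i$, a contradiction. When $l_r=l_i$, the vertex $u_{i,m}$ lies in $P_i\setminus W$, and every path between $u_{i,m}$ and a vertex $w\in W\setminus\{u_{r,m}\}$ passes through $v$ (because $u_{r,m}$ is the only $W$-vertex of $P_r$ and $P_i$ has none), giving $d(u_{i,m},w)=d(u_{r,m},w)$. Hence the codes of $u_{i,m}\notin W$ and $u_{r,m}\in W$ with respect to $W$ differ only in the coordinate of $u_{r,m}$, contradicting Lemma~\ref{lem1}.

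For the second part, the first part yields $|W\cap(V(P_i)\setminus\{v\})|\ge 1$ for each $i$, so the branches of $v$ together contribute at least $t$ vertices. To see this lower bound is attained, pick the terminal vertex $u_{i,l_i}$ in each branch and combine it with the portion of $W$ lying outside the branches of $v$; a direct distance calculation in the spirit of Theorem~\ref{thm WTMB with Uniqu Branch} shows that the codes of non-$W$ branch vertices differ from the codes of $W$-vertices in at least two coordinates, so the resulting set is a WTR-set by Lemma~\ref{lem1}. Since $W$ is a WTMB, it must match this minimum in the branches of $v$, giving exactly one vertex per branch of the form $u_{i,j}$ with $2\le j\le l_i$. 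The main obstacle is the implicit reduction at the start of the contradiction argument: one must justify passing to the configuration in which each non-empty branch carries exactly one $W$-vertex, since only then do the codes of $u_{r,m}$ and $u_{i,m}$ differ in a single coordinate, and providing this justification cleanly is the key technical step of the proof.
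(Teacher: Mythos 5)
Your strategy is essentially the paper's: argue by contradiction that no branch of $v$ can miss $W$, using a same-position pair $u_{i,m}\notin W$, $u_{r,m}\in W$ in two similar branches whose codes differ in a single coordinate (contradicting Lemma~\ref{lem1}), then invoke minimality of $|W|$ to get exactly one vertex per branch. The step you yourself flag and defer --- the ``replacement/shrinking'' reduction to a configuration in which every non-empty branch of $v$ carries exactly one $W$-vertex --- is a genuine gap, and in fact it cannot be closed, because the single-coordinate argument (and the theorem itself) fails without that hypothesis. Take the spider $G$ with center $v$ and legs $v,a_1,a_2$; $v,b_1,b_2$; $v,c_1,c_2,c_3$, so $t=3$, $l_1=l_2=3<l_3=4$, $\mu=1$ and $\dim_{wt}(G)=3$. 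The set $W=\{b_1,b_2,c_3\}$ is resolving, and the codes of the non-$W$ vertices $v,a_1,a_2,c_1,c_2$, namely $(1,2,3)$, $(2,3,4)$, $(3,4,5)$, $(2,3,2)$, $(3,4,1)$, each differ in at least two coordinates from each of $c_W(b_1)=(0,1,4)$, $c_W(b_2)=(1,0,5)$, $c_W(c_3)=(4,5,0)$. The dangerous pair $a_1$ versus $b_1$ now differs in \emph{two} coordinates precisely because $P_2$ carries a second $W$-vertex: $d(a_1,b_2)=3\neq 1=d(b_1,b_2)$. So $W$ is a WTMB containing no vertex of $P_1$ and two vertices of $P_2$, and no minimality argument can rule this out, since this distribution also places exactly $t$ vertices in the branches of $v$.

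For what it is worth, the paper's own proof silently commits the same error: it asserts $d(u_{1,j},w)=d(u_{i,j},w)$ for all $w\in W\setminus\{u_{i,j}\}$, which is false whenever $W\cap V(P_i)$ contains a second vertex. So you have correctly isolated the weak point of the argument, but what you present as a deferred technicality is exactly where both your proof and the paper's break down; your second part inherits the problem as well, since knowing only that the branches contribute at least $t-1$ vertices in total does not force the one-per-branch distribution. As written, the proposal does not establish the statement.
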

\begin{proof}
Since $v$ does not have the unique branch of shortest length, so
there exist $s$ $(2\le s\le t)$ similar branches $\{P_i: 1\le i\le
s\}$ of $v$, each has $l=l_i$ $(1\le i\le s)$ number of vertices.
Also $\{P_i: s< i\le t\}$ are remaining $t-s$ branches of $v$. As
$W$ is a resolving set of $G$ so by Remark
\ref{ThrmTerminalPathResolvingSet}, $W$ contains at least one vertex
from at least $t-1$ branches of $v$. We start by choosing $W\cap
V(P_1)=\emptyset$, i.e., $W$ does not contain any vertex from $P_1$,
then $W$ contains at least one vertex from $P_i$ $(2\le i\le t)$. If
we choose $u_{i,j}\in W\cap V(P_i)$ for some $i$ $(2\le i \le s)$
and some $j$ $(1< j\le l)$, then $d(u_{1,j},w)=d(u_{i,j},w)$ for all
$w\in W\setminus\{u_{i,j}\}$, thus the code of $u_{1,j}\in P_1$ and
the code of $u_{i,j}\in W$ with respect to $W$ differ by one
coordinate only, which is contradiction that $W$ is WTMB of $G$.
Thus $W\cap V(P_i)\ne\emptyset$ for each $i$ $(1\le i\le s)$.
Moreover
 $l<l_i$ for each $i$ where $(s< i \le t)$, so
 by Proposition \ref{prop2}, $W$ must contain at least one
vertex from remaining $t-s$ branches of $v$. Also $W$ is a WTMB of
$G$ if $|W\cap \{V(P_i):1\le i \le t\}|$ is minimum which is
possible only when we take exactly one vertex from each $P_i$ $(1\le
i \le t)$.
\end{proof}
\begin{corollary}\label{CorUpperboundDimWTDim} Let $G$ be a non-path
 tree and $v$ be an exterior major vertex of $G$ with $t$ branches
 and $v$ does not have the unique branch of shortest length, then
$\dim(G)\geq t-1$ and $\dim_{wt}(G)\geq t$.
\end{corollary}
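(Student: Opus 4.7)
The plan is to read off both inequalities directly from the structural results that have just been established, namely Remark \ref{ThrmTerminalPathResolvingSet} and Theorem \ref{thm WTMB without Uniqu Branch}. The corollary really amounts to a disjointness observation: the branches $P_1,\ldots,P_t$ rooted at the exterior major vertex $v$ intersect pairwise only at $v$ itself, so distinct branches contribute distinct vertices once we exclude $v$.

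For the bound $\dim(G) \geq t-1$, I would let $S$ be any metric basis of $G$ and apply Remark \ref{ThrmTerminalPathResolvingSet}, which guarantees that $S$ contains at least one vertex other than $v$ from at least $t-1$ of the branches of $v$. Since the vertex sets $V(P_i)\setminus\{v\}$ for $1\le i\le t$ are pairwise disjoint, these contributions furnish at least $t-1$ pairwise distinct elements of $S$, so $|S|\geq t-1$.

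For the bound $\dim_{wt}(G) \geq t$, I would let $W$ be a WTMB of $G$. Because $v$ has no unique branch of shortest length, Theorem \ref{thm WTMB without Uniqu Branch} applies and forces $W$ to contain a vertex $u_{i,j_i} \in P_i$ with $j_i \ge 2$ for every $i$ with $1\le i\le t$. Again the pairwise disjointness of the sets $V(P_i)\setminus\{v\}$ ensures these $t$ vertices are all distinct, so $|W|\ge t$ and therefore $\dim_{wt}(G)\geq t$.

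I do not expect any genuine obstacle here; both inequalities are a one-step consequence of already-proven lemmas once the disjointness of branches (apart from their common root $v$) is recorded. The only thing to be careful about is to emphasize that the vertices selected in each branch are different from $v$, which is exactly what Remark \ref{ThrmTerminalPathResolvingSet} and Theorem \ref{thm WTMB without Uniqu Branch} assert, so the cardinalities add across branches as claimed.
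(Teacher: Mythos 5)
Your proposal is correct and matches the paper's intent exactly: the corollary is stated there without proof as an immediate consequence of Remark \ref{ThrmTerminalPathResolvingSet} (giving $t-1$ distinct non-root vertices in any resolving set) and Theorem \ref{thm WTMB without Uniqu Branch} (giving one vertex from each of the $t$ branches in any WTMB). Your explicit note that the sets $V(P_i)\setminus\{v\}$ are pairwise disjoint is a worthwhile clarification but does not change the argument.
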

\begin{corollary}\label{Cor at least one vertex from all branches}
Let $G$ be a non-path tree and $W\subseteq V(G)$ where
$W=\{u_{i,j}^k\in P_i^k$ for each $k$ $(1\le k \le ex(G))$ and each
$i$ $(1\le i\le t_k)$ and at least one $j$ $(2\le j\le l_i^k)\}$,
then $W$ is WTR-set for $G$.
\end{corollary}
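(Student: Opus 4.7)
The plan is to verify, via Lemma~\ref{lem1}, that $W$ is both a resolving set and satisfies the condition that the code of each $x\in V(G)-W$ differs from the code of each $w\in W$ in at least two coordinates. One coordinate, that of $w$ itself, is automatic since $d(x,w)>0=d(w,w)$, so the extra condition reduces to: for every $x\in V(G)-W$ and $w\in W$ there exists $w'\in W\setminus\{w\}$ with $d(x,w')\ne d(w,w')$. Both parts of the verification rest on the following tree fact, which I would record as an auxiliary observation: for any two distinct vertices $y,z$ in a tree, a third vertex $p$ fails to distinguish $y$ and $z$ precisely when the nearest vertex on the unique $y$-$z$ path to $p$ is the midpoint of that path.

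First I would show $W$ is resolving. Given distinct $y,z\in V(G)$, consider the unique $y$-$z$ path and identify an exterior major vertex $v_k$ on or adjacent to it. Using the auxiliary fact together with the assumption that $W$ touches every branch of every exterior major vertex, one can select a vertex of $W$ lying outside the spur subtree at the midpoint of the $y$-$z$ path; such a vertex resolves $y$ and $z$.

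Next, I would verify the weak total condition. Write $w=u_{i,j}^k$ with $j\ge 2$ and analyse by the position of $x$. If $x$ lies on the same branch $P_i^k$ (including the case $x=v_k$), then any $w'\in W\cap P_{i'}^k$ with $i'\ne i$ satisfies $d(x,w')-d(w,w')=d(x,v_k)-d(w,v_k)\ne 0$, since both distances pass through $v_k$ and $x$, $w$ sit at distinct distances from $v_k$ on $P_i^k$; such $w'$ exists because $t_k\ge 2$ and $W$ meets every branch. In every remaining configuration (namely $x$ in a different branch of $v_k$, in a branch of another exterior major vertex, or on a spine between exterior major vertices), the same auxiliary fact applies: the $x$-$w$ path has a unique midpoint $m$, and since $W$ meets every branch of every exterior major vertex, at least one element of $W\setminus\{w\}$ must lie outside the spur subtree at $m$ and hence resolve $x$ and $w$.

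The main obstacle is the bookkeeping in this last case: ruling out the possibility that every vertex of $W\setminus\{w\}$ projects onto $m$. The critical observation is that branches of distinct exterior major vertices, and indeed distinct branches of the same exterior major vertex not containing $m$, lie in pairwise distinct components after deleting $m$, so at most one such component can coincide with the spur subtree at $m$. Since at least two branches of $v_k$ already contribute to $W$ and $w$ occupies only one of them, a resolving $w'$ is always available, which closes the argument.
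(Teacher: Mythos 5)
Your overall strategy --- verify the hypothesis of Lemma~\ref{lem1} directly, using the tree fact that a vertex $p$ fails to resolve $y,z$ exactly when the projection of $p$ onto the $y$--$z$ path is the midpoint of that path --- is sound, and it is the natural route here (the paper itself states this corollary without proof, so there is no written argument to compare against). Your handling of the case where $x$ lies on the same branch as $w$ is correct. The problem is the closing step, which is where all the real content sits, and there the argument as written is wrong. You claim that ``branches of distinct exterior major vertices, and indeed distinct branches of the same exterior major vertex not containing $m$, lie in pairwise distinct components after deleting $m$'': this is false (deleting $m$ produces only $d(m)$ components, and arbitrarily many branches of arbitrarily many exterior major vertices can sit inside a single one of them). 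You then conclude that ``since at least two branches of $v_k$ already contribute to $W$ and $w$ occupies only one of them, a resolving $w'$ is always available.'' That conclusion fails precisely in the critical configuration: if $w=u_{i,j}^k$ and $x$ lies outside all branches of $v_k$ with $d(x,v_k)=j-1$, then the midpoint $m$ of the $x$--$w$ path is $v_k$ itself, and \emph{every} $W$-vertex on every other branch of $v_k$ projects onto $v_k$ and therefore fails to resolve $x$ and $w$. Concretely: let $v$ and $v'$ be adjacent to a common vertex $s$, with leaves $a,b$ pendant at $v$ and $c,d$ pendant at $v'$, and take $W=\{a,b,c,d\}$, $x=s$, $w=a$; then $d(s,b)=2=d(a,b)$, so the other branch of $v$ does not supply a resolver.

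The missing idea is that in this configuration the resolver must be found in the component $C$ of $T-v_k$ containing $x$, and one has to prove that $W$ meets $C$. That requires an argument of roughly the following shape: $C$ is not a branch of $v_k$ (else $x$ would lie on a branch of $v_k$), so every leaf of $T$ inside $C$ has its nearest major vertex inside $C$; hence $C$ contains an entire branch of some other exterior major vertex, hence a vertex $w^*\in W\cap C$; and since the path from $w^*$ to the $x$--$w$ path first meets it inside $C$, the projection of $w^*$ is not $v_k=m$, so $w^*$ resolves $x$ and $w$. (In the example, $c$ and $d$ play the role of $w^*$.) A similar repair is needed when $x$ lies on a different branch of $v_k$ and the single $W$-vertex on that branch happens to sit at the midpoint. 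Separately, your resolving-set paragraph is asserted rather than proved --- note that the standard tree argument is usually stated for terminal vertices, whereas here the chosen $u_{i,j}^k$ may be interior to a branch --- so that step also needs to be written out. The statement itself is true, and your framework can be completed, but as it stands the decisive case is handled by an incorrect claim.
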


It was shown by Chartrand {\em et al.} in \cite{chartrand2000} that
$\dim(G) = \sigma(G)-ex(G)$ for a non-path tree $G$. Let $\mu \ge 0$
denotes the number of exterior major vertices of $G$ which do not
have the unique branch of shortest length. The next result provide
the weak total metric dimension of a tree.
\begin{theorem}\label{thwtDimAndDimension}
Let $G$ be a non-path tree, then $\dim_{wt}(G)=\dim(G)+\mu$.
\end{theorem}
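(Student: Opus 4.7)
The plan is to combine the structural Theorems~\ref{thm WTMB with Uniqu Branch} and~\ref{thm WTMB without Uniqu Branch} with Chartrand \emph{et al.}'s formula $\dim(G)=\sigma(G)-ex(G)$. First I would rewrite this as $\dim(G)=\sum_{k=1}^{ex(G)}(t_k-1)$, using $t_k=td(v_k)$ and the fact that only exterior major vertices contribute to $\sigma(G)$. The target identity then becomes: a WTMB contributes exactly $t_k-1$ vertices from the branches rooted at each exterior major vertex $v_k$ that possesses a unique branch of shortest length, and $t_k$ vertices from those rooted at each exterior major vertex that does not.

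For the lower bound $\dim_{wt}(G)\ge \dim(G)+\mu$, I would take an arbitrary WTMB $W$ and, separately for each exterior major vertex $v_k$, invoke Theorem~\ref{thm WTMB with Uniqu Branch} when $v_k$ has the unique branch of shortest length to obtain $|W\cap \bigcup_{i=2}^{t_k}(V(P_i^k)\setminus\{v_k\})|\ge t_k-1$, and invoke Theorem~\ref{thm WTMB without Uniqu Branch} otherwise to obtain $|W\cap \bigcup_{i=1}^{t_k}(V(P_i^k)\setminus\{v_k\})|\ge t_k$. A quick geometric remark about non-path trees establishes that any non-root branch vertex lies in the branches of at most one exterior major vertex (since the terminal closest to it fixes $v_k$ uniquely), so these local contributions are disjoint and sum to $|W|\ge \sum_k(t_k-1)+\mu=\dim(G)+\mu$.

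For the upper bound, I would construct an explicit WTR-set $W$ of the same size: for each $v_k$ with unique shortest branch $P_1^k$, include a single vertex $u_{i,j}^k$ with $j>l_1^k$ from each $P_i^k$, $2\le i\le t_k$; for each remaining $v_k$, include a single vertex from each of its $t_k$ branches. The cardinality is exactly $\sum_k(t_k-1)+\mu$. To verify that $W$ is a WTR-set, I would appeal to Corollary~\ref{Cor at least one vertex from all branches} for those exterior major vertices where every branch is represented, and to the resolvability computation embedded in the proof of Theorem~\ref{thm WTMB with Uniqu Branch} to show that for $v_k$ with a unique shortest branch the codes of vertices in the omitted branch $P_1^k$ already differ in at least two coordinates from every code in $W$ (via Lemma~\ref{lem1}).

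The main obstacle I anticipate is making the upper bound watertight: Corollary~\ref{Cor at least one vertex from all branches} strictly covers the case in which every branch is represented, whereas my construction deliberately omits the unique shortest branch at some exterior major vertices. I therefore have to argue that the ``differ in at least two coordinates'' property extracted locally from the proof of Theorem~\ref{thm WTMB with Uniqu Branch} survives when the contributions from all exterior major vertices are merged---the extra coordinates contributed by vertices chosen at other exterior major vertices can only increase the symmetric difference of codes, so the Lemma~\ref{lem1} criterion is preserved globally. Once this point is addressed, matching the two bounds gives $\dim_{wt}(G)=\dim(G)+\mu$.
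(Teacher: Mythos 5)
Your proposal is correct and follows essentially the same route as the paper: the lower bound by summing, over exterior major vertices, the per-branch requirements forced by Theorems~\ref{thm WTMB with Uniqu Branch} and~\ref{thm WTMB without Uniqu Branch} (the paper packages the extra $+\mu$ via Corollary~\ref{CorUpperboundDimWTDim}), and the upper bound by augmenting a Chartrand-type metric basis with one additional branch vertex for each of the $\mu$ exterior major vertices lacking a unique shortest branch. Your explicit remarks on the disjointness of the branch contributions and on why Corollary~\ref{Cor at least one vertex from all branches} does not directly cover the omitted shortest branches make the argument, if anything, slightly more careful than the paper's own.
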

\begin{proof}
Since every WTR-set is a resolving set, so inequality \ref{eq1},
Proposition \ref{prop2} and Corollary \ref{CorUpperboundDimWTDim},
yield $\dim_{wt}(G)\ge \dim(G)+\mu$. For $\dim_{wt}(G)\le
\dim(G)+\mu$, let $\{v_k: 1\le k\le ex(G)\}$ be the set of exterior
major vertices of $G$ and each $v_k$ has $t_k$ number of branches
$P_i^k$ and each branch $P_i^k$ has $l_i^k$ number of vertices. We
label $v_k$ is ascending order of $k$ such that $v_k :(1\le k \le
\mu)$ do not have the unique branch of shortest length and $v_k
:(\mu < k \le ex(G))$ have the unique branch of shortest length
$P_1^k$. Let $B$ be a metric basis of $G$ and by Remark
\ref{ThrmTerminalPathResolvingSet}, $B$ contains at least one vertex
from at least $t_k-1$ branches of $v_k$. We choose one vertex from
branch $P_i^k$ for each $i$ $(1\le i \le t_k-1)$ and each $k$ $(1\le
k \le \mu)$ other than $v_k$ and one vertex from branch $P_i^k$ for
each $i$ $(2\le i \le t_k)$ and each $k$ $(\mu < k \le ex(G))$ other
than $v_k$. By Theorem \ref{thm WTMB with Uniqu Branch} and Theorem
\ref{thm WTMB without Uniqu Branch}, $W=B\cup \{u_{t_k,l_{t_k}^k}^k:
1\le k \le \mu\}$ is a WTMB of $G$. It concludes the proof.
\end{proof}
%End Editions by Murtaza
\begin{theorem}\label{thm1}
Let $G$ be a non-path tree. Then $\dim_{wt}(G) = 2$ if and only if
every exterior major vertex of $G$ has at most three branches in
which one is the unique branch of shortest length and one of the followings hold:\\
(1)\ $G$ has exactly one exterior major vertex with three
branches and no exterior major vertex with two branches. \\
(2)\ $G$ has exactly two exterior major vertices with two branches
and no exterior major vertex with three branches.
\end{theorem}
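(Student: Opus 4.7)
The plan is to deduce the characterization directly from Theorem~\ref{thwtDimAndDimension}, which gives $\dim_{wt}(G)=\dim(G)+\mu$ for a non-path tree $G$, combined with the Chartrand--Eroh--Johnson--Oellermann formula $\dim(G)=\sigma(G)-ex(G)$ already quoted just before Theorem~\ref{thwtDimAndDimension}. Rewriting the latter as $\dim(G)=\sum_{v}(td(v)-1)$, where the sum is over the exterior major vertices of $G$, turns it into a sum of nonnegative integers. Since $\dim(G)=1$ forces $G$ to be a path, for a non-path tree we have $\dim(G)\ge 2$.

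For the forward direction I would assume $\dim_{wt}(G)=2$. Theorem~\ref{thwtDimAndDimension} then forces $\dim(G)+\mu=2$, and the bound $\dim(G)\ge 2$ yields $\dim(G)=2$ and $\mu=0$. The equation $\mu=0$ is precisely the statement that every exterior major vertex of $G$ carrying at least two branches admits a unique branch of shortest length (a vertex with a single branch satisfies the clause vacuously). The equation $\sum(td(v)-1)=2$, a sum of nonnegative integers, admits only two multisets of positive summands: $\{2\}$, coming from a single exterior major vertex with $td(v)=3$; or $\{1,1\}$, coming from two exterior major vertices with $td(v)=2$. Every other exterior major vertex must then contribute $0$, i.e.\ have $td(v)=1$. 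The first pattern gives case~(1) (one vertex with three branches, none with two), and the second gives case~(2) (two vertices with two branches, none with three); in both, the bound $td(v)\le 3$ on every exterior major vertex is immediate.

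For the converse I would assume the stated structural conditions and simply evaluate $\dim(G)$ and $\mu$. In case~(1) the unique vertex with three branches contributes $2$ to $\sum(td(v)-1)$, and every other exterior major vertex has $td\le 1$ and contributes $0$; in case~(2) the two distinguished vertices contribute $1$ each while the rest contribute $0$. Either way $\dim(G)=2$. The hypothesis that each vertex with two or three branches has a unique shortest branch, together with the vacuous uniqueness for the remaining vertices, gives $\mu=0$, and Theorem~\ref{thwtDimAndDimension} yields $\dim_{wt}(G)=2$.

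The main subtlety, rather than a genuine obstacle, is the bookkeeping around exterior major vertices with terminal degree one: the paper only introduces the notion of ``branch'' for major vertices with at least two terminals, so a $td=1$ vertex has at most one branch and is invisible both to $\sum(td(v)-1)$ and to $\mu$. Once one observes that such vertices may freely coexist with the one or two distinguished vertices of cases~(1) and~(2), the universal clause ``every exterior major vertex has at most three branches'' reduces to noting that all exterior major vertices have $td(v)\in\{1,2,3\}$ in the two admissible configurations, and the proof collapses to the arithmetic above.
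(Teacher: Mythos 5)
Your argument is correct, and it opens exactly as the paper does: from $\dim_{wt}(G)=2$, the fact that only paths have metric dimension one, and Theorem~\ref{thwtDimAndDimension}, both you and the paper deduce $\dim(G)=2$ and $\mu=0$, so that every exterior major vertex with at least two branches has a unique shortest one. The two proofs then part ways. The paper extracts the degree conditions structurally: it invokes Theorem~\ref{thm WTMB with Uniqu Branch} to exclude an exterior major vertex of terminal degree at least four and then runs an explicit case analysis on terminal degrees $2$ and $3$, again justified by that theorem, and it declares the converse obvious. You instead rewrite $\dim(G)=\sigma(G)-ex(G)$ as $\sum_v\bigl(td(v)-1\bigr)=2$ and enumerate the two partitions of $2$ into positive parts, which simultaneously gives $td(v)\le 3$ for every exterior major vertex, the two admissible configurations (1) and (2), and --- run in reverse --- the converse that the paper omits. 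Your route is shorter, makes it transparent that the theorem's hypotheses are precisely a restatement of ``$\dim(G)=2$ and $\mu=0$,'' and actually supplies the missing half of the equivalence; the paper's route stays closer to the WTMB machinery it has already built and does not need the Chartrand et al.\ formula beyond its use inside Theorem~\ref{thwtDimAndDimension}. Your remark on terminal-degree-one exterior major vertices (invisible to both $\sum_v(td(v)-1)$ and $\mu$, and satisfying the uniqueness clause vacuously) is the correct reading of the paper's conventions and is worth keeping explicit.
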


\begin{proof}
Given that $G$ is a non-path tree. Let $\dim_{wt}(G)=2$, since only
a path has metric dimension one \cite{khuller1996}, so inequality
(\ref{eq1}) implies that $\dim(G) = \dim_{wt}(G) = 2$ and hence $\mu
= 0$. Thus all exterior major vertices of $G$ has the unique branch
of shortest length. Suppose $G$ has an exterior major vertex with
$td(v)\ge 4$, then by Theorem \ref{thm WTMB with Uniqu Branch},
$\dim_{wt}(G)\ge 3$. Thus every exterior major vertex of $G$ has at
most three branches. We discuss the following two cases:
\\
\\
\noindent Case 1: $td(v)\le 2$ for all exterior major vertices $v$.
If $G$ has an exterior major vertex with terminal degree 2, then
there are two such vertices $v_1$, $v_2$ each has two branches
$P_1^k$, $P_2^k$ $k=1,2$ with $P_1^k$ as the unique branch of
shortest length of $v_k$ for each $k=1,2$ and by Theorem \ref{thm
WTMB with Uniqu Branch}, $\dim_{wt}(G)=2$. Suppose $G$ has another
exterior major vertex $u$ $(\ne v_1,v_2)$ with $td(u)=2$, then by
Theorem \ref{thm WTMB with Uniqu Branch}, $\dim_{wt}(G)\ge 3.$
\\
\\
\noindent Case 2: $td(v)\le 3$ for all exterior major vertices $v$.
If $G$ has an exterior major vertex $v$ with $td(v)=3$, then $v$ is
the only vertex with $td(v)=3$ and $v$ has exactly three branches
$P_1,P_2,P_3$ in which $P_1$ is the unique branch of shortest
length, and hence by Theorem \ref{thm WTMB with Uniqu Branch},
$\dim_{wt}(G)=2$. Suppose $G$ has another vertex $u$ with $td(u)=3$
or $td(u)=2$, then in both cases $\dim_{wt}(G)\ge 3$ by Theorem
\ref{thm WTMB with Uniqu Branch}.

  The converse part of the theorem is obvious.
\end{proof}
The following result provides the realization of weak total metric
dimension in some graphs $G$ of order $n$.

\begin{theorem}\label{thm6}
For every two integers $a,b$ with $2\leq a\leq b$, there exists a
graph $G$ of order $b$ with $\dim_{wt}(G) = a$.
\end{theorem}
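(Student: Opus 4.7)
The plan is to split the proof by cases on the relation between $a$ and $b$, exhibiting in each case an explicit connected graph of order $b$ with $\dim_{wt}(G)=a$.

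\textbf{Case $a=b$.} Take $G=K_b$. Every pair of vertices of $K_b$ is a pair of twins, so Theorem~\ref{thm5} immediately yields $\dim_{wt}(K_b)=b$.

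\textbf{Case $a=2\leq b$.} Take $G=P_b$. When $b=2$ the two vertices are twins and Theorem~\ref{thm5} gives $\dim_{wt}(P_2)=2$. When $b\geq 3$, the two (non-adjacent) endpoints form a resolving set, and a quick check of codes via Lemma~\ref{lem1} shows they form a WTR-set; a singleton, meanwhile, can never be a WTR-set once $V(G)-W\neq\emptyset$. Hence $\dim_{wt}(P_b)=2$ in both subcases.

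\textbf{Case $3\leq a<b$.} I would build $G$ as the spider obtained from a central vertex $v$ by attaching $a-1$ pendant edges giving leaves $l_1,\ldots,l_{a-1}$ together with one further pendant path $v-u_1-u_2-\cdots-u_{b-a}$ of length $b-a\geq 1$. Then $|V(G)|=1+(a-1)+(b-a)=b$, the unique exterior major vertex of $G$ is $v$ with terminal degree $a$, and the $a-1\geq 2$ pendant leaves are all branches of length one, so $v$ has no unique branch of shortest length. Consequently the parameter $\mu$ appearing in Theorem~\ref{thwtDimAndDimension} equals $1$, and the Chartrand--Salehi--Zhang formula recalled just before that theorem gives $\dim(G)=\sigma(G)-\ex(G)=a-1$. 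Theorem~\ref{thwtDimAndDimension} then delivers $\dim_{wt}(G)=\dim(G)+\mu=a$.

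No genuine obstacle arises: the heavy lifting has already been done by the tree formulas of the previous section, and the realization is explicit. The only minor point worth recording is the degenerate subcase $b=a+1$ in Case~3, where the spider collapses to the star $K_{1,a}$; there all $a$ leaves are equally shortest branches of the centre, so $\mu=1$ still holds and the same computation gives $\dim_{wt}(K_{1,a})=a$.
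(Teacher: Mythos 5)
Your proposal is correct and follows essentially the same route as the paper: the $a=b$ case via the all-twins graph and Theorem \ref{thm5}, and for $a<b$ the same spider/broom construction (a path with $a-1$ extra leaves attached at one end), computing $\mu=1$ and $\dim(G)=a-1$ and invoking Theorem \ref{thwtDimAndDimension}, with the path $P_b$ handling $a=2$. The only cosmetic difference is that you treat $a=2$ as a separate case up front rather than as a degenerate instance of the general construction.
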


\begin{proof}
For $a = b$, let $G$ be a graph of order $b$ in which each vertex is
twin. Then Theorem \ref{thm5}, concludes that $\dim_{wt}(G) =b =a$.
Now, consider $a\leq b-1$. Consider a path $P_{b-a+1}$ and let one
leaf of this path be $l$. Attach $a-1$ leaves $l_1,l_2,\ldots,
l_{a-1}$ with the leaf $l$. Call the resultant graph $G$ of order
$b$. For $a\geq 3$, $G$ is a non-path tree with exactly one exterior
major vertex $l$ with $td(l) = a$ in which $a-1$ are twin leaves.
Then $\mu = 1$. Also $\dim(G) = a-1$, by a result of Chartrand {\em
et al.} given in \cite{chartrand2000}. Thus, Theorem
\ref{thwtDimAndDimension} implies that $\dim_{wt}(G) = a$. For $a
=2$, $G$ is a path $P_b$ vertices and it is straightforward to see
that the set of two leaves of $P_b$ is a WTMB of $G$.
\end{proof}
\par
We define a notion which is useful for finding an upper bound on
weak total resolving number of a non-path tree.
$$ \theta(G)= \displaystyle\min_{(1\le i\le t_k),(1\le k \le ex(G))}l_i^k $$
 For instance, if a tree has twin leaves, then $\theta=2$.
\begin{proposition}\label{PropNminusThetaPlulsTwo}
 Let $G$ be a non-path tree of order $n\ge 4$ and $W\subseteq
V(G)$ with cardinality $n-\theta(G)+2$, then $W$ contains at least
one vertex from branch $P_i^k$ of an exterior major vertex $v_k$
other than $v_k$, for each $i$ $(1\le i \le t_k )$ and each $k$
$(1\le k \le ex(G))$ where $t_k$, $P_i^k$ as define earlier.
\end{proposition}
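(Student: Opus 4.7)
The plan is to argue by contradiction using a straightforward counting argument on the complement $V(G)\setminus W$. Suppose, to the contrary, that there exist indices $k$ and $i$ (with $1\le k\le ex(G)$ and $1\le i\le t_k$) such that $W$ contains no vertex of the branch $P_i^k$ other than the exterior major vertex $v_k$ itself. Then all the vertices of $P_i^k$ except possibly $v_k$ lie in $V(G)\setminus W$.

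Next I would simply count. Since $W$ has cardinality $n-\theta(G)+2$, the complement satisfies
\[
|V(G)\setminus W| \;=\; n-\bigl(n-\theta(G)+2\bigr) \;=\; \theta(G)-2.
\]
Recall from the definition of a branch that $P_i^k\colon v_k=u_{i,1}^k,u_{i,2}^k,\ldots,u_{i,l_i^k}^k$ has exactly $l_i^k$ vertices (with $v_k$ as the first). Therefore $V(P_i^k)\setminus\{v_k\}$ has $l_i^k-1$ vertices, and by the definition $\theta(G)=\min_{k,i} l_i^k$ we have $l_i^k\ge \theta(G)$, hence
\[
\bigl|V(P_i^k)\setminus\{v_k\}\bigr| \;=\; l_i^k-1 \;\ge\; \theta(G)-1.
\]
But the assumption forces $V(P_i^k)\setminus\{v_k\}\subseteq V(G)\setminus W$, which gives $\theta(G)-1\le \theta(G)-2$, a contradiction.

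The argument is entirely combinatorial and I do not anticipate a substantive obstacle; the only point requiring care is bookkeeping the convention that $v_k$ is counted among the $l_i^k$ vertices of $P_i^k$, so that the branch contributes $l_i^k-1$ (not $l_i^k$) vertices to $V(G)\setminus W$ under the negated hypothesis. Once this is made precise, the inequality $\theta(G)-1>\theta(G)-2$ yields the result immediately, and no appeal to Lemma \ref{lem1}, Proposition \ref{prop2}, or the structural results of the preceding section is needed. (Note also that $\theta(G)\ge 2$ since every branch has at least two vertices, so the statement is non-vacuous for all relevant trees of order $n\ge 4$.)
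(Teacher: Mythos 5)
Your proof is correct, and it takes a genuinely different route from the paper's. You negate the conclusion and count in the complement: $|V(G)\setminus W|=\theta(G)-2$, while a branch $P_i^k$ containing no $W$-vertex other than $v_k$ would force its $l_i^k-1\ge\theta(G)-1$ non-root vertices into that complement, an immediate contradiction. The paper instead argues via an extremal configuration: it asserts that $n-\theta(G)+2$ is ``smallest'' for a spider with a single exterior major vertex and three similar branches, computes $n=3l^1-2$, $\theta(G)=l^1$, and verifies the claim for a $2l^1$-set in that one tree. As written, that reduction is not a valid logical step -- the proposition quantifies over all non-path trees of order $n\ge 4$, and checking one extremal family does not cover an arbitrary tree -- so the paper's argument is at best a sketch of the intuition behind the bound. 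Your complement-counting argument proves the general statement directly from the definition of $\theta(G)$ and the convention that $v_k$ is one of the $l_i^k$ vertices of $P_i^k$ (which you correctly flag as the only bookkeeping hazard), and it delivers exactly the form of the conclusion that Theorem \ref{TheoremLowerBoundonWTResolvingNumber} needs. In short: same destination, but your route is both more elementary and actually rigorous.
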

\begin{proof}
Let $G$ be a non-path tree in which $n-\theta(G)+2$ is smallest.
Also $n-\theta(G)+2$ is smallest when $n$ is smallest and
$\theta(G)$ is largest. Such a tree contains only one exterior major
vertex $v_1$ (if $G$ has more than one exterior major vertices then
$n$ is not smallest) and $v_1$ have only three similar branches
$P_1^1$, $P_2^1$, $P_3^1$ (if $G$ has more than three branches or
branches are not similar, then $n$ is not largest as compared to
$\theta(G)$) and each branch has $l^1$ is number of vertices. Then
$n=3l^1-2$ and $\theta(G)=l^1$ and $n-\theta(G)+2=2l^1$. Number of
vertices (including $v_1$) in two any two branches say $P_1^1$,
$P_2^1$ is $2l^1-1$. Thus any set $W\subseteq V(G)$ of $2l^1$
vertices contains at least one vertex from third branch $P_3^1$
other than $v_1$.
\end{proof}
\begin{theorem}\label{TheoremLowerBoundonWTResolvingNumber}
Let $G$ be a non-path tree of order $n\ge 4$, then $ res_{wt}(G)\le
n-\theta(G)+2 $.
\end{theorem}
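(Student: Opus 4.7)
The plan is to verify that every $W \subseteq V(G)$ with $|W| = n-\theta(G)+2$ is a WTR-set, which by the definition of $res_{wt}(G)$ immediately yields the claimed bound. As the first step I would invoke Proposition \ref{PropNminusThetaPlulsTwo} to conclude that any such $W$ meets every branch $P_i^k$ at some vertex $u_{i,j}^k$ with $j\ge 2$, i.e., $W$ contains a non-root representative of each branch of each exterior major vertex. Picking one such representative per branch produces a subset $W_0 \subseteq W$ satisfying the hypothesis of Corollary \ref{Cor at least one vertex from all branches}, so $W_0$ is itself a WTR-set for $G$.

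The remaining task is to upgrade the WTR property from $W_0$ to the larger set $W$, which I would handle via Lemma \ref{lem1}. Fix $x \in V(G)-W$ (so also $x\notin W_0$) and $w \in W$; the goal is to produce two coordinates at which $c_W(x)$ and $c_W(w)$ disagree. If $w \in W_0$, the two differing coordinates are inherited directly from the WTR property of $W_0$ applied to the pair $(x,w)$, since those coordinates live in $W_0 \subseteq W$. If instead $w \in W\setminus W_0$, the coordinate indexed by $w$ itself already differs because $d(x,w)\ge 1 > 0 = d(w,w)$, and the resolving property of $W_0$ (which holds since every WTR-set is, in particular, a resolving set) supplies some $u \in W_0 \subseteq W\setminus\{w\}$ with $d(x,u)\ne d(w,u)$, providing the required second differing coordinate.

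The main obstacle is nothing more than the bookkeeping in this last step: checking that the distinguishing vertex $u$ genuinely lies in $W\setminus\{w\}$, which is immediate from $W_0 \subseteq W$ together with $w\notin W_0$, and checking that $x$ and $w$ are distinct, which is automatic from $x \notin W$ and $w \in W$. Once Proposition \ref{PropNminusThetaPlulsTwo} and Corollary \ref{Cor at least one vertex from all branches} are in hand, no additional tree-structural case analysis is needed to conclude that $W$ is a WTR-set and hence that $res_{wt}(G)\le n-\theta(G)+2$.
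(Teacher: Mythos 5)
Your proposal is correct and takes essentially the same route as the paper: Proposition \ref{PropNminusThetaPlulsTwo} guarantees that any set of cardinality $n-\theta(G)+2$ contains a non-root vertex of every branch of every exterior major vertex, and Corollary \ref{Cor at least one vertex from all branches} then yields the WTR property. Your additional step of extracting $W_0$ and upgrading the WTR property to the superset $W$ via Lemma \ref{lem1} only makes explicit a monotonicity argument that the paper applies implicitly by invoking the corollary directly on $W$.
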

\begin{proof}
Let $W\subseteq V(G)$ with cardinality  $n-\theta(G)+2$, then by
Proposition \ref{PropNminusThetaPlulsTwo}, $W$ contains at least one
vertex from all branches of all exterior major vertices of $G$ other
than exterior major vertices and hence by Corollary \ref{Cor at
least one vertex from all branches}, $W$ is a WTR-set. It concludes
the proof.
\end{proof}
By Proposition \ref{prop2}, Corollary \ref{CorUpperboundDimWTDim}
and Theorem \ref{thwtDimAndDimension}, we have the following
proposition for a lower bound on the weak total resolving number of
a non-path tree.

\begin{proposition}\label{prop7}
Let $G$ be a non-path tree of order $n$ with $p$ exterior major
vertices $v_1,v_2,\dots, v_p$ each has $t_k$ branches of length
$l^k_i-1$ $(1\leq k\leq p; 1\leq i\leq t_k)$. Then
$$\sum_{k=1}^p \sum_{i = 1}^{t_k} (l^k_i-1)\leq res_{wt}(G).$$
\end{proposition}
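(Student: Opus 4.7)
My plan is to establish $res_{wt}(G) \ge M$, where $M = \sum_{k=1}^{p} \sum_{i=1}^{t_k}(l_i^k - 1)$, by exhibiting an explicit subset $W \subseteq V(G)$ of cardinality $M-1$ that fails to be a WTR-set for $G$. Since $res_{wt}(G)$ is by definition the smallest integer $r$ for which every $r$-subset of $V(G)$ is a WTR-set, producing such a $W$ immediately forces $res_{wt}(G) > M-1$.

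The natural candidate to perturb is the set $W_0 = \{u_{i,j}^k : 1 \le k \le p,\ 1 \le i \le t_k,\ 2 \le j \le l_i^k\}$ consisting of all non-$v_k$ vertices across the branches of every exterior major vertex. By construction $|W_0| = M$, and Corollary \ref{Cor at least one vertex from all branches} certifies that $W_0$ is a WTR-set. I would reorganize $W_0$ as follows: select an exterior major vertex $v_{k_0}$ together with one of its branches $P_{j_0}^{k_0}$, delete the non-$v_{k_0}$ vertices of $P_{j_0}^{k_0}$ from $W_0$, and compensate by adding $v_{k_0}$ itself back. This changes the cardinality by $-l_{j_0}^{k_0} + 2$, so the size lands at $M - 1$ when $l_{j_0}^{k_0} = 3$ (the remaining configurations are tuned by iterating the swap at another exterior major vertex, or by trimming additional interior vertices of $P_{j_0}^{k_0}$). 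The failure of WTR for the resulting $W$ is witnessed by the pair $u_{j_0,2}^{k_0} \in V(G) \setminus W$ and $u_{i_0,2}^{k_0} \in W$, for any other branch $P_{i_0}^{k_0}$ of $v_{k_0}$ with $l_{i_0}^{k_0} \le l_{j_0}^{k_0}$: because $W$ avoids all of $P_{j_0}^{k_0}$ other than $v_{k_0}$, these two vertices have identical distances to every remaining vertex of $W$, so their $W$-codes differ in exactly one coordinate, contradicting Lemma \ref{lem1}. This is precisely the contrapositive of Proposition \ref{prop2}.

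The case analysis is organized by Theorem \ref{thwtDimAndDimension} through the parameter $\mu$. For an exterior major vertex with a unique branch of shortest length, $P_{j_0}^{k_0}$ is taken to be a longer branch; for an exterior major vertex contributing to $\mu$, two branches of equal shortest length supply the symmetry, and Corollary \ref{CorUpperboundDimWTDim} supplies the lower bound that makes the symmetric-pair witness available. The sub-case of twin leaves at some $v_{k_0}$ is handled directly by Remark \ref{rem1}, since omitting one leaf of a twin pair destroys the resolving property altogether.

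The main obstacle will be ensuring the cardinality lands exactly at $M-1$ in every possible configuration of branch lengths. When no branch of length $l_{j_0}^{k_0} = 3$ is available at a single $v_{k_0}$, the construction must be refined—either by combining swaps at distinct exterior major vertices so the sizes add correctly, or by removing additional interior vertices of the chosen branch, while verifying in each refined configuration that the symmetric-pair witness survives and continues to give a one-coordinate discrepancy ruled out by Lemma \ref{lem1}.
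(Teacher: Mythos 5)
Your strategy---forcing $res_{wt}(G)\ge M$ by exhibiting one $(M-1)$-set that is not a WTR-set---is the right shape for a lower bound on $res_{wt}$, and it is more of an argument than the paper gives (the paper derives the proposition in one line from Proposition \ref{prop2}, Corollary \ref{CorUpperboundDimWTDim} and Theorem \ref{thwtDimAndDimension}, which at best yield $res_{wt}(G)\ge \dim_{wt}(G)=\dim(G)+\mu$, generally much smaller than $M=\sum_k\sum_i(l_i^k-1)$). But your proposal has a concrete failure in its key step. After you delete $V(P_{j_0}^{k_0})-\{v_{k_0}\}$ from $W_0$ and reinsert $v_{k_0}$, the resulting $W$ still contains \emph{all} $l_{i_0}^{k_0}-1$ non-root vertices of the comparison branch $P_{i_0}^{k_0}$. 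Your witness pair $u_{j_0,2}^{k_0}\notin W$, $u_{i_0,2}^{k_0}\in W$ is \emph{not} equidistant from those vertices: for $j'\ge 3$ one has $d(u_{j_0,2}^{k_0},u_{i_0,j'}^{k_0})=j'$ whereas $d(u_{i_0,2}^{k_0},u_{i_0,j'}^{k_0})=j'-2$. So whenever $l_{i_0}^{k_0}\ge 3$ the two codes differ in at least two coordinates and Lemma \ref{lem1} is not violated. (The proof of Proposition \ref{prop2} in the paper commits the same oversight---it tacitly assumes $W$ meets $P_r$ in a single vertex---so invoking ``the contrapositive of Proposition \ref{prop2}'' does not repair the step.) The cardinality bookkeeping you defer to the end is therefore not a removable technicality.

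Indeed the gap cannot be closed, because the inequality itself fails. Take the spider $T$ with one vertex $v$ of degree three and three pendant paths of two edges each: $n=7$, $p=1$, $t_1=3$, $l_i^1=3$, so $M=6$. A $5$-subset of $V(T)$ omits only two vertices; if it omits an entire branch it must contain both non-root vertices of each of the other two branches, and then every vertex of the omitted branch is separated from its ``symmetric partner'' in two coordinates by that partner's own branch. Checking all $\binom{7}{5}=21$ five-sets (six up to the symmetry permuting the legs) shows each is a WTR-set, whence $res_{wt}(T)\le 5<6=M$. So no non-WTR witness of size $M-1$ exists for this tree, and no refinement of your construction---or of the paper's citation-style justification---can prove the proposition as stated.
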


 Consider two vertices $x$ and $y$ such that $x\sim y$. Take four
paths $P_{r\geq 3}:\ x_1,x_2,\ldots,x_{r}$; $P'_{r}:\
x'_1,x'_2,\ldots,x'_{r}$; $P_3:\ y_1,y_2,y_3$ and $P'_3:\
y'_1,y'_2,y'_3$. Identify the vertex $x$ with the leaves $x_1, x'_1$
and identify the vertex $y$ with the leaves $y_1, y'_1$. The
resultant graph $G$ is a non-path tree with two exterior major
vertices $x = x_1 = x'_1$ and $y = y_1=y'_1$ each has two similar
branches. Clearly, $\mu = 2$ and so $\dim_{wt}(G) = 4$, by Theorem
\ref{thwtDimAndDimension}. Note that $p = 2$, $t_1 = 2, t_2=2$ and
so $\sum \limits_{k = 1}^{p}\sum \limits_{i = 1}^{t_k}(l^k_i-1) =
2(r+1)$. We claim that $res_{wt}(G) = 2(r+1)$. It is a routine
exercise to see that any set of cardinality $2(r+1)$ is a WTR-set
for $G$. Indeed, if we say that $res_{wt}(G)<2(r+1)$, then the set
$\left(V(P_r)-\{x_1\}\right)\cup \left(V(P'_r)-\{x'_1\}\right)\cup
\{x,y\}\cup \{v\}$, where $v\in V(P_3)-\{y_1\}$ or $v\in
V(P'_3)-\{y'_1\}$, is not a WTR-set for $G$, by Corollary
\ref{CorUpperboundDimWTDim}. It concludes that $res_{wt}(G) = \sum
\limits_{k = 1}^{p}\sum \limits_{i = 1}^{t_k}(l^k_i-1)$.

From Proposition \ref{prop7} and Theorem
\ref{TheoremLowerBoundonWTResolvingNumber} we have following
theorem.

\begin{theorem}\label{thm bounds on resolving number}
Let $G$ be non-path tree, then $$\sum_{k=1}^p \sum_{i = 1}^{t_k}
(l^k_i-1)\leq res_{wt}(G)\le n-\theta(G)+2.$$
\end{theorem}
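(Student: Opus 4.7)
The statement is a direct amalgamation of two inequalities already established independently in the paper, so my plan is simply to chain them together and verify that nothing is lost when they are placed side by side.

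First I would invoke Proposition \ref{prop7} to get the lower bound $\sum_{k=1}^p \sum_{i=1}^{t_k}(l_i^k-1) \le res_{wt}(G)$. Recall that this inequality came from the fact that any WTR-set must (by Proposition \ref{prop2} applied branch by branch, together with Corollary \ref{CorUpperboundDimWTDim}) pick up at least $l_i^k - 1$ vertices from each branch $P_i^k$ after the exterior major vertex, summed over all branches at all exterior major vertices. Since $res_{wt}(G)$ is the smallest $r$ with the property that \emph{every} $r$-set is a WTR-set, it must at least be as large as the size of the smallest WTR-set, hence as large as the sum above.

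Next I would apply Theorem \ref{TheoremLowerBoundonWTResolvingNumber} to get the upper bound $res_{wt}(G) \le n - \theta(G) + 2$. That theorem was proved by showing, via Proposition \ref{PropNminusThetaPlulsTwo}, that any subset of $V(G)$ of cardinality $n-\theta(G)+2$ already contains a vertex from every branch of every exterior major vertex (excluding the exterior major vertex itself), and then invoking Corollary \ref{Cor at least one vertex from all branches} to conclude that such a set is a WTR-set. By definition of $res_{wt}(G)$, this forces $res_{wt}(G) \le n-\theta(G)+2$.

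Concatenating the two bounds gives the desired chain
\[
\sum_{k=1}^p \sum_{i=1}^{t_k}(l_i^k-1) \;\le\; res_{wt}(G) \;\le\; n-\theta(G)+2.
\]
There is no real obstacle here, since both halves have been proved; the only thing to remark on is consistency: both inequalities are stated for a non-path tree of order $n\ge 4$ (the upper bound) and a non-path tree with exterior major vertices (the lower bound), and the hypothesis $G$ is a non-path tree covers both. Thus the proof amounts to a single sentence citing the two earlier results.
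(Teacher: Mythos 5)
Your proof matches the paper's: the theorem is stated there with no separate argument beyond citing Proposition \ref{prop7} for the lower bound and Theorem \ref{TheoremLowerBoundonWTResolvingNumber} for the upper bound, which is exactly what you do. One small caveat: your parenthetical gloss of why Proposition \ref{prop7} holds (that $res_{wt}(G)$ is at least the size of the smallest WTR-set, hence at least the sum) is not the right reason, since the smallest WTR-set has size $\dim_{wt}(G)=\dim(G)+\mu$, which is in general strictly smaller than $\sum_{k}\sum_{i}(l_i^k-1)$; but as you are citing the proposition rather than reproving it, this does not affect the validity of your argument.
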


%%%-----------------------------------------------------------------------


\begin{thebibliography}{99}
\bibitem{chappell2008}
Chappell, G., Gimbel, J., Hartman, C.: Bounds on the metric and
partition dimensions of a graph. Ars Combin. 88, 349--366 (2008).
\bibitem{poisson2000}
Chartrand, G., Poisson, C., Zhang, P.: Resolvability and the upper
dimension of graphs. Comp. Math. Appl. 39(12), 19-28 (2000).
\bibitem{zhang2000}
Chartrand, G., Salehi, E., Zhang, P.: The partition dimension of a
graph. Aequationes Math. 59(1-2), 45-54 (2000).
\bibitem{chartrandzhang2000}
Chartrand, G., Zhang, P.: On the chromatic dimension of a graph.
Congr. Numer. 145, 97--108 (2000).
\bibitem{chartrand2000}
Chartrand, G. Eroh, L., Johnson, M.A., Oellermann,  O.R.:
Resolvability in graphs and the metric dimension of a graph. Disc.
Appl. Math. 105(1-3), 99--113 (2000).
\bibitem{Cockayne}
Cockayne, E.J., Dawes, R.M., Hedetniemi,  S.T.: Total domination in
graphs. Networks. 10, 211-219 (1980).
\bibitem{gary1979}
Gary, M.R., Johnson,  D.S.: Computers and Intractibility: A guide to
theory of NP-completeness. Freeman, New York (1979).
\bibitem{harary1976}
Harary, F., Melter, R.A.: On the metric dimension of a graph. Ars
Combin. 2, 191--195 (1976).
\bibitem{hernando2008}
Hernando, C., Mora, M., Slater, P.J., Wood, D.R.: Fault-Tolerant
metric dimension of graphs. Proc. Internat. Conf. Convexity in
Discrete Structures, Ramanujan Math. Society Lecture Notes. 5,
81--85 (2008).
%\bibitem{jannesari2011}
%M. Jannesari, B. Omoomi, On randomly $k$-dimensional graphs, {\em
%Appl. Math. Lett.} {\bf 24} (2011) 1625--1629.
\bibitem{javaid2012}
Javaid, I., Iftikhar, F., Salman, M.: Total resolvability in graphs.
Middle-East Journal of Scientific Research. 11, 1649--1658 (2012).
\bibitem{khuller1996}
Khuller, S., Raghavachari, B., Rosenfeld, A.: Landmarks in graphs.
Disc. Appl. Math. 70, 217--229 (1996).
\bibitem{khuller1994}
Khuller, S., Raghavachari, B., Rosenfeld, A.: Localization in
graphs. Technical Report CS-TR-3326, University of Maryland at
College Park (1994).
\bibitem{melter1984}
Melter, R.A., Tomescu, I.: Metric bases in digital geometry.
Computer Vision, Graphics and Image Processing. 25, 113--121 (1984).
\bibitem{slater1975}
Slater, P.J.: Leaves of trees. Congr. Numer. 14, 549--559 (1975).
\end{thebibliography}
\end{document}